\documentclass[onefignum,onetabnum]{siamart190516}


\usepackage{lipsum}
\usepackage{amsfonts}
\usepackage{graphicx}
\usepackage{epstopdf}
\usepackage{algorithmic}
\ifpdf
  \DeclareGraphicsExtensions{.eps,.pdf,.png,.jpg}
\else
  \DeclareGraphicsExtensions{.eps}
\fi


\newsiamremark{remark}{Remark}
\newsiamremark{hypothesis}{Hypothesis}
\crefname{hypothesis}{Hypothesis}{Hypotheses}
\newsiamthm{claim}{Claim}

\usepackage{amsmath,amssymb,mathtools,setspace,dsfont,color}
\usepackage{xspace,bm}
\usepackage{todonotes}
\usepackage{algorithmic} 
\usepackage[resetcount,vlined,boxruled,boxed,algo2e]{algorithm2e}
\usepackage{ulem}

\def\cG{\mathcal{G}\xspace}

\def\cV{\mathcal{V}\xspace}

\def\CC{\mathbb{C}\xspace}
\def\RR{\mathbb{R}\xspace}
\def\ZZ{\mathbb{Z}\xspace}
\def\QQ{\mathbb{Q}\xspace}
\def\NN{\mathbb{N}\xspace}

\def\MM{\mathbb{M}\xspace}
\def\VV{\mathbb{V}\xspace}

\def\x{\bm{x}\xspace}

\def\x{\overline{x}\xspace}

\newcommand{\Elim}{\mathtt{Elim}\xspace}

\DeclarePairedDelimiter\abs{\lvert}{\rvert}
\DeclarePairedDelimiter\norm{\lVert}{\rVert}

\newtheorem{problem}[theorem]{Problem}

\headers{The Multivariate Schwartz-Zippel Lemma}{M.~L.~Do\u{g}an, A.A.~Erg\"ur, J.~D.~Mundo, and E.~Tsigaridas}

\title{The Multivariate Schwartz-Zippel Lemma
  \thanks{Submitted to the editors 2020.
    \funding{
      LD is supported by ERC under the European's Horizon 2020
      research and innovation programme (grant agreement no. 787840).
      AE is supported by Einstein Foundation, Berlin, NSF REU Grant
      DMS-1460766, and NSF CCF 2110075.
      JM is supported by NSF REU Grant DMS-1460766.
      ET is supported by the ANR JCJC GALOP (ANR-17-CE40-0009), the
      PGMO grant ALMA, and the PHC GRAPE.}}}

\author{
  M. Levent Do\u{g}an\thanks{Technische Universit\"at Berlin, Institut
    f\"ur Mathematik, Strasse des 17. Juni 136, 10623, Berlin,
    Germany
    (\email{dogan.mlevent@gmail.com})}
  \and
  Alperen A. Erg\"ur\thanks{The University of Texas at San Antonio, One UTSA Circle, San Antonio, TX, 78249
    (\email{alperen.ergur@utsa.edu})}
  \and
  Jake D. Mundo\thanks{Brown University, Division of Applied Mathematics,
    Providence, RI
    (\email{jake\_mundo@brown.edu})}
  \and
  Elias~Tsigaridas\thanks{Inria Paris and Institut de
  Math{\'e}matiques de Jussieu-Paris Rive Gauche, Sorbonne
  Universit\'e and Paris Universit\'e,
  France (\email{elias.tsigaridas@inria.fr})}
 }

\usepackage{amsopn}


\ifpdf
\hypersetup{
  pdftitle={Our title},
  pdfauthor={Us}
}
\fi


\externaldocument{ex_supplement}


\begin{document}

\maketitle

\begin{abstract}
Motivated by applications in combinatorial geometry, we consider the following question:  Let $\lambda=(\lambda_1,\lambda_2,\ldots,\lambda_m)$ be an $m$-partition of a positive integer $n$, $S_i \subseteq \mathbb{C}^{\lambda_i}$ be finite sets, and let $S:=S_1 \times S_2 \times \ldots \times S_m \subset \mathbb{C}^n$ be the multi-grid defined by $S_i$. Suppose $p$ is an $n$-variate degree $d$ polynomial. How many zeros does $p$ have on $S$?
  
We first develop a multivariate generalization of the Combinatorial Nullstellensatz that certifies the existence of a point $t \in S$
so that $p(t) \neq 0$. Then, we show that a natural multivariate generalization of the DeMillo-Lipton-Schwartz-Zippel lemma holds, except for a special family of polynomials that we call $\lambda$-reducible.  This yields a simultaneous generalization of Szemer\'edi-Trotter theorem and Schwartz-Zippel lemma into higher dimensions, and has applications in incidence geometry. Finally, we develop a symbolic algorithm  that identifies certain $\lambda$-reducible polynomials. More precisely, our symbolic algorithm detects polynomials that include a cartesian product of hypersurfaces in their zero set. It is likely that using Chow forms the algorithm can be  generalized to handle arbitrary $\lambda$-reducible polynomials, which we leave as an open problem.
\end{abstract}

\begin{keywords}
  Schwartz-Zippel lemma, Combinatorial Nullstellesantz, combinatorial geometry, polynomial partitioning, incidence geometry, resultant, generalized characteristic polynomial
\end{keywords}

\begin{AMS}
  68Q25, 68R10, 68U05
\end{AMS}

\section{Introduction}
\label{sec:intro}
Incidence geometry studies properties of arrangements of geometric
objects such as lines, points, and hypersurfaces.  The focus is on
understanding extremal geometric configurations. For example, if $P$ is collection of points in the real plane with cardinality $m$, and $L$ is a collection of lines with cardinality $n$, one is interested in understanding what could be the maximal number of incidences between the elements of $P$ and $L$ in terms of $m$ and $n$.

Incidence geometry is a fundamental and relatively old field in
combinatorics. Yet, its quests in the last two decades have unraveled surprising connections between theoretical computer science, harmonic analysis, and number theory vindicating long standing conjectures \cite{guth-katz,dvir2009size,guth2010algebraic}. Most of these recent breakthroughs in incidence geometry rely on creatively using (the zero sets of) polynomials and classical algebraic geometry.  This circle of ideas already formed an emerging field in combinatorics called ``the polynomial method'' \cite{dvir-survey,tao,guth2016polynomial,Sheffer-book-09}.  

One of the early achievements of the polynomial method is Alon's Combinatorial Nullstellensatz \cite{alon}, which reads as follows. 
\begin{theorem}[Combinatorial Nullstellensatz]
  \label{thm:alon-cn}
  Let $\mathbb{F}$ be a field and let $p \in \mathbb{F}[x_1,x_2, \ldots, x_n]$ be a polynomial of degree 
  $\deg(p)=\sum_{i}^n t_i$ for some positive integers $t_i$; further assume that the coefficient of $\prod_{i=1}^{n} x_i^{t_i}$ in $p$ is non-zero. Let $S_i \subseteq \mathbb{F}$, for $i \in [n]$, be finite sets with $\abs{S_i} > t_i$ and 
  $S:=S_1 \times S_2 \times \ldots S_n \subseteq \mathbb{F}^n$. Then, there exists $t \in S$ such that $ p(t) \neq 0$. 
\end{theorem}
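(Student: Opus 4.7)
The plan is to follow the classical Nullstellensatz template: reduce $p$ to a polynomial $\tilde p$ of bounded individual degrees that agrees with $p$ on $S$, show $\tilde p$ is nonzero as a polynomial, and then invoke a grid-vanishing lemma to find a point of $S$ where $\tilde p$ (hence $p$) does not vanish.

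First, I would introduce the univariate polynomials $g_i(x_i):=\prod_{s\in S_i}(x_i-s)$; each $g_i$ is monic of degree $|S_i|$ and vanishes identically on $S_i$. Writing $g_i(x_i)=x_i^{|S_i|}+h_i(x_i)$ with $\deg h_i<|S_i|$, I would repeatedly apply the rewriting rule $x_i^{|S_i|}\mapsto -h_i(x_i)$ to any occurrence of $x_i^m$ with $m\ge|S_i|$, until every variable has individual degree strictly less than $|S_i|$. Since the rewrites differ from the original polynomial only by $\mathbb{F}$-combinations of the $g_i$, and since each $g_i$ vanishes on $S_i$, the resulting polynomial $\tilde p\in\mathbb{F}[x_1,\dots,x_n]$ satisfies $\tilde p(t)=p(t)$ for every $t\in S$.

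Second, I would track the coefficient of the distinguished monomial $\prod_i x_i^{t_i}$ through the reduction. The elementary step $x_i^{|S_i|}\mapsto -h_i(x_i)$ strictly decreases the total degree of the affected monomial, so any monomial of $p$ of total degree strictly less than $\sum_i t_i=\deg(p)$ cannot reduce into $\prod_i x_i^{t_i}$. Among monomials of top degree $\sum_i t_i$, those that actually get rewritten have some exponent $\ge|S_i|$, so after the first substitution their total degree drops below $\sum_i t_i$ and they also cannot contribute. The only surviving contribution to the $\prod_i x_i^{t_i}$ coefficient is from $\prod_i x_i^{t_i}$ itself (note $t_i<|S_i|$, so it is already reduced), and this coefficient is nonzero by hypothesis. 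Hence $\tilde p\not\equiv 0$.

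Third, I would prove by induction on $n$ the grid-vanishing lemma: if $q\in\mathbb{F}[x_1,\dots,x_n]$ satisfies $\deg_{x_i}q<|S_i|$ for each $i$ and $q$ vanishes on all of $S$, then $q\equiv 0$. The base case $n=1$ is just that a nonzero univariate polynomial of degree less than $|S_1|$ cannot have $|S_1|$ roots. For the inductive step, write $q=\sum_{j<|S_n|}x_n^j q_j(x_1,\dots,x_{n-1})$; fixing any $(a_1,\dots,a_{n-1})\in S_1\times\cdots\times S_{n-1}$ reduces $q(a_1,\dots,a_{n-1},x_n)$ to a univariate polynomial of degree less than $|S_n|$ vanishing on $S_n$, forcing every $q_j(a_1,\dots,a_{n-1})=0$, whence the inductive hypothesis gives $q_j\equiv 0$. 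Applying this to $\tilde p$ produces a point $t\in S$ with $\tilde p(t)\neq 0$, and therefore $p(t)=\tilde p(t)\neq 0$.

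The main technical obstacle is the second step: one needs the hypothesis $\deg(p)=\sum_i t_i$ combined with $t_i<|S_i|$ to guarantee that the rewriting procedure neither destroys the leading coefficient nor produces additional contributions from other top-degree monomials. Once degree-monotonicity of the reduction is in place, the remaining pieces are routine.
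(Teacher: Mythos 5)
Your proof is correct, and it is essentially the classical argument of Alon: reduce $p$ modulo the grid polynomials $g_i(x_i)=\prod_{s\in S_i}(x_i-s)$ to a representative $\tilde p$ of individual degree $<|S_i|$ agreeing with $p$ on $S$, observe that degree-monotonicity of the reduction protects the coefficient of $\prod_i x_i^{t_i}$, and finish with the standard grid-vanishing lemma. Each step is sound.

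Worth noting, however, is that the paper does not give its own proof of this statement; it cites Alon and only proves the \emph{multivariate} generalization (Theorem~\ref{combnull}), which covers arbitrary finite $S_i\subset\mathbb{F}^{\lambda_i}$ and uses the algebraic degree $\deg(S_i)$ in place of cardinality. The paper's argument is genuinely different from yours: instead of reducing $p$ modulo vanishing polynomials, it constructs functionals $f_i\colon S_i\to\mathbb{F}$ by solving a multivariate Vandermonde system, chosen so that $\sum_{y\in S_i}f_i(y)y^\beta$ vanishes for every relevant $\beta\neq\alpha_i$ and equals $1$ for $\beta=\alpha_i$; pairing $p$ against $\prod_i f_i$ then isolates the coefficient $p_\alpha$ directly, and the assumption $\deg(S_i)>d_i$ is exactly what makes the Vandermonde matrix full rank. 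The reason for this change of technique is structural: when $\lambda_i>1$ there is no single canonical polynomial $g_i$ vanishing on $S_i$ to reduce modulo (the vanishing ideal $I(S_i)$ is no longer principal), so your reduction step has no natural analogue. Specializing the paper's dual-functional construction to $\lambda=(1,\dots,1)$ recovers another proof of the classical theorem, but your reduction argument is the shorter and more standard route for this one-dimensional-blocks case, while the paper's approach is the one that scales to the setting of the article.
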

The deep impact of Alon's result in combinatorics hinges on the fact that one can translate a diverse set of questions in structural graph theory, additive number theory, and Diophantine equations into the problem of locating zeros of a polynomial on a grid of points.

In the opposite direction to Alon's theorem, one can also try to use a known grid of points to understand  an unknown polynomial.  This direction represents a subject of extensive research in symbolic computation where the basic result is DeMillo-Lipton-Schwartz-Zippel (DLSZ) lemma~\cite{lipton,saxena,sharir}.
\begin{theorem}[DeMillo-Lipton-Schwartz-Zippel Lemma] \label{thm:uni-SZ}
Let $S \subseteq \mathbb{F}$ be a finite set where $\mathbb{F}$ is a field and let $p \in \mathbb{F}[x_1,x_2, \ldots,x_n]$ be a
polynomial of degree $d$. Assume $\abs{S} > d$ and consider 
$S^n = S \times S \times \ldots \times S$.
If $Z(p)$ is the zero set of $p$, then we have
  \[ \abs{Z(p) \cap S^n} \leq d \abs{S}^{n-1}, \]
where $\abs{.}$ denotes the cardinality of a set.
\end{theorem}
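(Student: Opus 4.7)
The plan is to prove the bound by induction on the number of variables $n$, following the classical route. The base case $n=1$ is immediate: a nonzero univariate polynomial of degree $d$ has at most $d$ roots in $\mathbb{F}$, so $|Z(p) \cap S| \leq d = d|S|^0$.

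For the inductive step, I would regard $p$ as a polynomial in $x_n$ with coefficients in $\mathbb{F}[x_1, \ldots, x_{n-1}]$ and write
\[
p(x_1, \ldots, x_n) = \sum_{i=0}^{k} p_i(x_1, \ldots, x_{n-1})\, x_n^i,
\]
where $k \leq d$ is the degree of $p$ in $x_n$ and the leading coefficient $p_k$ is a nonzero polynomial of total degree at most $d-k$ in the remaining $n-1$ variables. I would then split the count of zeros $(a_1, \ldots, a_n) \in Z(p) \cap S^n$ according to whether $p_k(a_1, \ldots, a_{n-1})$ vanishes.

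On tuples $(a_1, \ldots, a_{n-1}) \in S^{n-1}$ with $p_k(a_1, \ldots, a_{n-1}) \neq 0$, the univariate specialization $p(a_1, \ldots, a_{n-1}, x_n)$ has degree exactly $k$ and contributes at most $k$ zeros to $S$, giving at most $k|S|^{n-1}$ zeros in total. On tuples where $p_k$ vanishes, I would invoke the inductive hypothesis on $p_k$ (a polynomial of degree $\leq d-k$ in $n-1$ variables) to bound the number of such tuples by $(d-k)|S|^{n-2}$; then the trivial bound $|S|$ on the zeros in the $x_n$-direction yields at most $(d-k)|S|^{n-1}$ zeros. Summing the two contributions produces $k|S|^{n-1} + (d-k)|S|^{n-1} = d|S|^{n-1}$, as required.

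No serious obstacle is expected; the one detail requiring care is to verify that $\deg(p_k) \leq d - k$ in the \emph{total} degree sense, so that the inductive hypothesis delivers the correct count on $S^{n-1}$. I note that the hypothesis $|S| > d$ is not actually used to prove the bound itself but rather to guarantee $d|S|^{n-1} < |S|^n$, i.e.\ to ensure that $p$ is not identically zero on $S^n$ and hence that the bound has nontrivial content.
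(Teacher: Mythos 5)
Your proof is the standard induction-on-variables argument for the DeMillo--Lipton--Schwartz--Zippel lemma, and it is correct: the split on whether the leading coefficient $p_k$ (a nonzero polynomial of total degree $\leq d-k$ in $n-1$ variables) vanishes at $(a_1,\dots,a_{n-1})$ yields the two bounds $k|S|^{n-1}$ and $(d-k)|S|^{n-1}$, which add to $d|S|^{n-1}$, and your observation that $|S|>d$ is only needed to make the bound nontrivial (not to derive it) is also correct. Note, however, that the paper does not prove Theorem~\ref{thm:uni-SZ} at all; it is stated as classical background with citations to the literature (\cite{lipton,saxena,sharir}), so there is no in-paper proof to compare against --- your argument simply supplies the textbook proof that the authors omit.
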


A common view of these two classical results would be that Combinatorial Nullstellensatz certifies if a given polynomial $p$ entirely vanishes on a grid, where  DLSZ lemma provides quantitative estimates on the number of zeros of $p$ on the grid. 

Mojarrad, Pham, Valculescu, and de Zeeuw  further noticed that the question of estimating the number of zeros of a $4$-variate polynomial on a finite subset of $\mathbb{C}^2 \times \mathbb{C}^2$ is equivalent to classical questions in incidence geometry \cite{dezeeuw}. Motivated by Mojarrad, Pham, Valculescu, and de Zeeuw's observation, we consider the following questions: Let $\lambda_i \in \NN$, for $1 \leq i \leq m$, be a partition of $n$, that is $\sum_{i=1}^m \lambda_i= n$, and let $S_i \subseteq \mathbb{C}^{\lambda_i}$. Consider the multi-grid $S$ defined as
\[ S := S_1 \times S_2 \times \cdots \times S_m  .\]
For a given $n$-variate polynomial $p$ of  degree $d$ can we prove that there is a $t \in S$ with $p(t) \neq 0$? Can we go one step further and prove an upper bound for the number of zeros of $p$ on $S$?

We prove multivariate generalizations of Combinatorial Nullstellensatz and DLSZ Lemma which answers both of these questions. These results have  immediate consequences in incidence geometry, some of which  we collect in Section~\ref{sec:appl-CG}.



\subsection{Summary of Our Results}
First, we present a generalization of Alon's Combinatorial Nullstellensatz (\Cref{combnull}) on a multi-grid. Then, we develop a generalization of DeMillo-Lipton-Schwartz-Zippel Lemma (\Cref{thm-SZ}) to the multivariate setting. Here matters are considerably more complicated as certain multi-grids and polynomials do not allow any nontrivial bound.
We assume that the structure of the multi-grid is a priori not known, and we seek polynomials that are compatible with any multi-grid. We call this family of polynomials  $\lambda$-irreducible polynomials (\Cref{def:lambda-irr}). The proofs are in \Cref{sec:proofs}.

Our algebraic toolbox allows us to obtain bounds for a variety of geometric configurations in a unified way (\Cref{sec:appl-CG}).
We derive some corollaries such a complex version of Szemer\'edi-Trotter theorem (\Cref{cor:ST-in-C}), that are known to be sharp, to demonstrate sharpness of our results.

We present an algorithm (\Cref{sec:algo}) to recognize certain
$\lambda$-reducible polynomials: it detects if there are hypersurfaces $V_i \subset \mathbb{C}^{\lambda_i}$ that satisfy $V_1 \times V_2 \times \ldots \times V_m \subset Z(p)$.  The construction is based on some classical, yet powerful, tools from computational algebra such as multivariate resultants and regular sequences which we introduce in (\Cref{sec:-alg-prelim}). The algorithm has the potential to be generalized to handle arbitrary $\lambda$-reducible polynomials using Chow forms, but we leave this as an open problem.

\paragraph{Comparison with previous work}
\label{sec:previous-work}
There have been numerous remarkable articles that focus on incidences
between a collection of real algebraic sets and a set of points
under certain tameness assumptions. We humbly provide a sample
\cite{tao-solymosi,pach-sharir,sheffer,adam-semi} but we warn the
reader that they only represent the tip of an iceberg.
These articles pose natural assumptions on the input data and derive sharp estimates. However, in most of the cases, these combinatorial assumptions on the input data are not formalized as a checkable condition but assumed to be granted.
Our contribution consists in identifying a workable assumption on the input ($\lambda$-irreducibility, see \cref{def:lambda-irr}) that we equip with an algorithm to detect certain $\lambda$-irreducible polynomials.

The bounds of our main result (Thm.~\ref{thm-SZ}) seem to be sharp 
at this level of generality except some loss in the exponents of $d$; this can be seen from sharp bounds presented in various corollaries in \cref{sec:appl-CG}. However, unlike the mentioned results in the literature, our estimates do not improve with extra assumptions on the data.

\subsection{Multivariate Combinatorial Nullstellensatz}
\label{sec:mCN}
To present a  multivariate generalization of the Combinatorial
Nullstellensatz, we need to introduce
the algebraic degree of a finite set.
\begin{definition}[Algebraic Degree of a Finite Set]
  Let $\mathbb{F}$ be a field and let $S \subset \mathbb{F}^n$ be a
  finite set of points. Let $I(S) \subset \mathbb{F}[x_1,\ldots,x_n]$
  be the ideal of polynomials vanishing on $S$. We define
  \[ \deg(S) := \min_{p \in I(S)} \deg(p) \]
  \noindent to be the algebraic degree of $S$.
\end{definition}
For the univariate case, that is when $S \subset \mathbb{F}$, it holds that
$\deg(S)=\abs{S}$. This was one of the key observations in Alon's
celebrated Combinatorial Nullstellensatz. However, for $n \geq 2$ one
can have arbitrarily large sets of degree one in $\mathbb{F}^n$: just
consider many points sampled from a hyperplane. The only general
relation between the size and the degree of a set
$S \subset \mathbb{F}^n$ seems to be the  following inequality 
\[ \abs{S} \geq \binom{\deg(S) -1 +n}{n} \]
This inequality can proved by linear algebra as follows:  Let the vector space of polynomials of degree at most $\deg(S)-1$ be denoted by $P_{\deg(S)-1}$, and consider the linear map $L_S : P_{\deg(S)-1} \rightarrow \mathbb{F}^{\abs{S}}$ that is defined by listing point-wise evaluations of a polynomial on the set $S$.
Then, if $\abs{S} < \dim(P_{\deg(S)-1})$, the map $L_S$ has non-trivial kernel, i.e. there exists a polynomial vanishing to the entire set $S$.

Let $\lambda=(\lambda_1, \lambda_2, \ldots, \lambda_m)$ be an $m$-partition of $n$, i.e., $n=\sum_i \lambda_i$. We will consider multi-grids with partition structure determined by $\lambda$.
So, it makes sense to use the following convention;
$\bar{x}_1=(x_1,x_2, \ldots, x_{\lambda_1})$,
$\bar{x}_2=(x_{\lambda_1+1}, x_{\lambda_1+2}, \ldots,
x_{\lambda_1+\lambda_2} )$, and so on. Moreover, we will use $\deg_i(p)$ to denote the degree of $p$ with respect to the variables $\bar{x}_i$. It holds that $\deg_{i}(p) \leq \deg(p)$ and $\deg(p) \leq \sum_i \deg_i(p)$.  

\begin{theorem}[Multivariate  Combinatorial Nullstellensatz]
  \label{combnull}
  Let $\mathbb{F}$ be a field, $\lambda=(\lambda_1, \lambda_2, \ldots, \lambda_m)$ be an $m$-partition of $n$, and let $p(x) \in \mathbb{F}[x_1,x_2,\ldots,x_n]$ be a polynomial with $\deg_i(p)=d_i$. Furthermore, assume that the coefficient of $x^{\alpha}$ in $p(x)$, that satisfies
  $ \deg_i(x^{\alpha})=d_i$ for all $i \in [m]$, is not zero. Let
  $S_i \subset \mathbb{F}^{\lambda_i}$ be finite sets and consider the
  multi-grid
  $S := S_1 \times S_2 \times \ldots \times S_{m} \subset \mathbb{F}^n$.
  If $\deg(S_i) > d_i $ for all
  $i \in [m]$, then  there exists a $t \in S$ such
  that $ p(t) \neq 0$.
\end{theorem}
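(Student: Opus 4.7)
The plan is to proceed by induction on $m$, the length of the partition $\lambda$. For the base case $m=1$, observe that the hypotheses force $\deg(p) = d_1$ (since there is a monomial of total degree $d_1$ with nonzero coefficient and $\deg_1(p)=d_1$), while every nonzero polynomial vanishing on $S_1$ has degree at least $\deg(S_1) > d_1$ by definition of the algebraic degree. Hence $p \notin I(S_1)$, so $p$ does not vanish identically on $S_1$.

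For the inductive step, assume the result for $(m-1)$-partitions and suppose toward a contradiction that $p$ is identically zero on $S = S_1 \times \cdots \times S_m$. Fix an arbitrary $(s_1, \ldots, s_{m-1}) \in S_1 \times \cdots \times S_{m-1}$. The specialization $p(s_1, \ldots, s_{m-1}, \bar{x}_m) \in \mathbb{F}[\bar{x}_m]$ has total degree at most $d_m$ and vanishes on $S_m$; since $d_m < \deg(S_m)$, the defining property of the algebraic degree forces this specialization to be the zero polynomial.

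Now expand $p$ with respect to the last block of variables as $p = \sum_{\mu} c_\mu(\bar{x}_1,\ldots,\bar{x}_{m-1})\, \bar{x}_m^{\mu}$. The previous step says each coefficient $c_\mu$ vanishes on $S_1 \times \cdots \times S_{m-1}$. Writing $\alpha = (\alpha^{(1)},\ldots,\alpha^{(m)})$ along the block decomposition, set $q := c_{\alpha^{(m)}}$; then $q$ vanishes on the smaller multi-grid. I would then verify that $q$ satisfies the hypotheses of the theorem for the $(m-1)$-partition $(\lambda_1, \ldots, \lambda_{m-1})$: the coefficient of $\prod_{i<m} \bar{x}_i^{\alpha^{(i)}}$ in $q$ is literally the coefficient of $x^\alpha$ in $p$ and hence nonzero, and for each $i<m$ we have simultaneously $\deg_i(q) \leq \deg_i(p) = d_i$ and $\deg_i(q) \geq |\alpha^{(i)}| = d_i$, so $\deg_i(q) = d_i < \deg(S_i)$. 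Invoking the induction hypothesis on $q$ produces a point of $S_1 \times \cdots \times S_{m-1}$ where $q$ does not vanish, a contradiction.

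Conceptually, the only nontrivial input beyond elementary polynomial manipulation is the defining property of $\deg(S_i)$, used once per level of induction to strip off one block. The main obstacle is not analytical but notational: ensuring that the specified leading monomial $x^\alpha$ and the block-degree profile $(d_1,\ldots,d_{m-1})$ are inherited correctly by the extracted coefficient $q$, so that the induction hypothesis is cleanly applicable. This bookkeeping is forced by the very definitions of $\deg_i$ and of the block decomposition of $\alpha$, so no further structural idea should be required.
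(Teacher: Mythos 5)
Your proof is correct, and it takes a genuinely different route from the paper's. The paper argues directly by constructing, for each $S_i$, a function $f_i : S_i \to \mathbb{F}$ (via a multivariate Vandermonde / Lagrange-interpolation system whose solvability is exactly where $\deg(S_i) > d_i$ enters) such that the weighted sum $\sum_{t \in S} \bigl(\prod_i f_i(t_i)\bigr) p(t)$ collapses to the single coefficient $p_\alpha$; if $p$ vanished on $S$ this sum would be $0$, contradicting $p_\alpha \neq 0$. You instead induct on $m$: you specialize the first $m-1$ blocks to strip off $\bar{x}_m$, use $\deg(S_m) > d_m$ to kill the specialization identically, extract the coefficient $q = c_{\alpha^{(m)}}$ of $\bar{x}_m^{\alpha^{(m)}}$, and verify that $q$ inherits the hypotheses for the truncated partition (in particular $\deg_i(q) = d_i$ for $i < m$, sandwiched between the lower bound $|\alpha^{(i)}|$ and the upper bound $\deg_i(p)$) so the induction hypothesis applies. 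Both proofs deploy the defining property of $\deg(S_i)$ exactly once per block; your version avoids setting up and solving a linear system, replacing it with a cleaner recursive coefficient-extraction, at the cost of being phrased by contradiction rather than as an explicit ``coefficient $=$ weighted evaluation sum'' identity. The paper's identity form can be read off more information (an explicit linear combination of grid evaluations recovering $p_\alpha$), but for the stated existence result the two are equivalent in strength and yours is the more elementary path.
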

The proof of the theorem appears in \cref{sec:proof-CN}.

\subsection{Multivariate Schwartz-Zippel Lemma}
\label{sec:mSZ}

In the same way we use Alon's Nullstellensatz to understand the structure of a
grid created from a number of finite sets $S_i \subseteq \mathbb{F}$, we can use its
multivariate extension supported by Theorem \ref{combnull} to certify
$S \not \subset Z(p)$ for a multi-grid.
However, for  applications in incidence geometry one needs a quantitative statement, like DeMillo-Lipton-Schwartz-Zippel
lemma, for multi-grids. We provide such a quantitative statement in
Theorem~\ref{thm-SZ}, but first we would like to consider some examples.

Consider
$g_1, g_2 \in \mathbb{C}[x_1,x_2,x_3]$, $g_3,g_4 \in \mathbb{C}[x_4,x_5,x_6]$,
and  $p=g_1 h_1 + g_2 h_2 + g_3 h_3 + g_4 h_4$, where
$h_i \in \mathbb{C}[x_1,x_2,\ldots,x_6]$. If the polynomials $g_1$ and $g_2$ are
generic, then the intersection $Z(g_1) \cap Z(g_2)$ is a one dimensional
variety; the same holds for $Z(g_3) \cap Z(g_4)$.
We have the following inclusion:
\[ ( Z(g_1) \cap Z(g_2) ) \times ( Z(g_3) \cap Z(g_4) ) \subseteq Z(p)   .\] 
Therefore, for any sets $S_1 \subset Z(g_1) \cap Z(g_2)$ and $S_2 \subset Z(g_3) \cap Z(g_4)$, we have $S_1 \times S_2 \subset Z(p)$. More generally, we can consider two positive dimensional varieties
$V_1 , V_2 \subseteq \mathbb{C}^n$ and then take an ideal sum in $\mathbb{C}[x_1,\ldots, x_n, x_{n+1}, \ldots, x_{2n}]$: $I := I(V_1) + I(V_2)$. Then, any polynomial $f \in I$ vanishes on $V_1 \times V_2$.

These examples indicate that to have a quantitative statement on
$\abs{Z(p) \cap S}$ one has to assume certain compatibility conditions between  $p$ and $S$.  Since we assume the structure of the multi-grid $S$ is a priori not known, we take it as a Gordian knot and seek polynomials that are compatible with any grid. We call such polynomials $\lambda$-irreducible.
\begin{definition}[$\lambda$-irreducible algebraic sets]
  \label{def:lambda-irr}
  Let $\lambda$ be an $m$-partition of $n$ and let $V \subseteq \mathbb{C}^n$ be
  an algebraic set. We say $V$ is $\lambda$-reducible if there exist positive
  dimensional varieties $V_i \subseteq \mathbb{C}^{\lambda_i}$, for $i \in [m]$,
  such that
  \[ V_1 \times V_2 \times \ldots \times V_m \subseteq V . \]
  We call $V$ a $\lambda$-irreducible algebraic set otherwise. If $V$ is a
  hypersurface defined by a polynomial $p$, then we say $p$ is
  $\lambda$-reducible (resp. $\lambda$-irreducible).
\end{definition}
Mojarrad, Pham, Valculescu and de Zeeuw \cite{dezeeuw} studied the problem
for the special case where $\lambda=(2,2)$. Note that a positive dimensional variety embedded in $\mathbb{C}^2$ is either the entire space $\mathbb{C}^2$ or it is a one-dimensional hypersurface. Based on this fact, the authors of \cite{dezeeuw} observed that a  polynomial $p(x) \in \mathbb{C}[x_1,x_2,x_3,x_4]$ is $(2,2)$-reducible if and only if there exist polynomials
$g_1 \in \mathbb{C}[x_1,x_2]$ and $g_2 \in \mathbb{C}[x_3,x_4]$ of degree at
least one, and $h_1,h_2 \in \mathbb{C}[x_1,x_2,x_3,x_4]$ such that it holds
\[
  p(x)= g_1(x_1,x_2)h_1(x) + g_2(x_3,x_4) h_2(x) . 
\]
The authors of \cite{dezeeuw} raise the following question: Is there
an algorithm that decides if a $p(x) \in \mathbb{C}[x_1,x_2,x_3,x_4]$
is $(2,2)$-reducible?  The algorithm in Sec.~\ref{sec:algo} answers this question in a more general setting and proves the following theorem.
\begin{theorem} [Symbolic Algorithm] \label{symbolicalgorithm}
  Let $\lambda=(\lambda_1,\lambda_2,\ldots,\lambda_m)$ be an $m$-partition of $n$ and let  
  $p \in \mathbb{C}[{x}_1,\ldots, {x}_n]$ be a polynomial of degree $d$.  There exists a symbolic algorithm
  \textsc{is\_decomposable} (Alg.~\ref{alg:is_decomposable}) which
  for a given polynomial $p$
  finds irreducible polynomials $g_i \in \mathbb{C}[\bar{x}_i]$
  with $deg g_i \geq 1$, $\bar{x}_i  = x_{\lambda_1 + \cdots + \lambda_{i-1} +1}, \dots,
  x_{\lambda_1 + \cdots + \lambda_{i-1} +\lambda_i}$
  , $x = x_1, \dots, x_n$,
  such that
  \[ p(x) = \sum_{i=1}^m g_i(\bar{x}_i) \, h_i(x) ,\]
  or certifies that no such decomposition exists. 
\end{theorem}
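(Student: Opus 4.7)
The plan is to decide the ideal membership $p \in (g_1, \ldots, g_m)$ for unknown irreducible $g_i \in \mathbb{C}[\bar{x}_i]$ by first reformulating it geometrically, then generating a finite list of candidate factors in each coordinate block via elimination, and finally verifying the full decomposition directly. The sought decomposition $p = \sum_i g_i(\bar{x}_i) h_i(x)$ is equivalent to $p \in (g_1, \ldots, g_m) \subset \mathbb{C}[x]$, which is the vanishing ideal of the product of hypersurfaces $V_1 \times \cdots \times V_m$ with $V_i := Z(g_i)$. Because the $g_i$'s sit in pairwise disjoint variable sets and are each irreducible, they form a regular sequence, and the quotient ring splits as the $\mathbb{C}$-tensor product $\bigotimes_i \mathbb{C}[\bar{x}_i]/(g_i)$. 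So the question is purely geometric: does $Z(p)$ contain a product of hypersurfaces, and if so, exhibit them.

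To produce candidates for each $g_i$, I will use the following observation: if the decomposition exists, then reducing $p$ modulo the subideal $(g_j)_{j \neq i}$ gives $p \equiv g_i \, \tilde h_i$ in $A_i[\bar{x}_i]$, where $A_i := \mathbb{C}[\bar{x}_j : j \neq i]/(g_j)_{j \neq i}$. In particular, $g_i$ must appear as an irreducible factor of $p$ viewed inside $K_i[\bar{x}_i]$ with $K_i := \mathrm{Frac}(A_i)$. To carry this out without knowing the $g_j$'s in advance, I apply multivariate resultants and generalized characteristic polynomials (as set up in the algebraic preliminaries) to eliminate the variables $\bar{x}_j$ for $j \neq i$ against an auxiliary system in those blocks, chosen so that by the Poisson-type product formula the resulting polynomial in $\bar{x}_i$ is divisible by $g_i$ with positive multiplicity. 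Factoring this polynomial over $\mathbb{C}[\bar{x}_i]$ yields a finite list $\mathcal{C}_i$ of irreducible candidates for $g_i$.

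The verification step enumerates tuples $(g_1, \ldots, g_m) \in \mathcal{C}_1 \times \cdots \times \mathcal{C}_m$ and tests $p \in (g_1, \ldots, g_m)$ by iterated polynomial division across the blocks: reduce $p$ modulo $g_1$ in the $\bar{x}_1$-variables, then modulo $g_2$ in $\bar{x}_2$, and so on. Since the variable supports are pairwise disjoint, these reductions commute and the final remainder is a well-defined normal form; if it vanishes then the quotients produced along the way furnish the required $h_i$. If no tuple in $\mathcal{C}_1 \times \cdots \times \mathcal{C}_m$ succeeds, the algorithm certifies that no decomposition exists.

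The main obstacle will be arranging the auxiliary elimination system of the candidate-generation step so that $g_i$ is guaranteed to appear as a factor of the output (and not be lost to extraneous contributions, such as components at infinity or non-transverse intersections among the auxiliary polynomials). This is precisely where the generalized characteristic polynomial, which stabilizes multivariate resultants under infinitesimal symbolic perturbations, plays a central role, while B\'ezout-type degree bounds keep each candidate list $\mathcal{C}_i$ of size polynomial in $\deg(p)$, ensuring that the enumeration terminates.
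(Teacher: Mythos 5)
Your high-level architecture matches the paper's: reformulate the decomposition as containment of a product of hypersurfaces in $Z(p)$, generate a finite list of irreducible candidates for each $g_i$ via elimination/GCP, and then verify a putative tuple $(g_1,\dots,g_m)$ directly. Your verification step is fine and arguably cleaner than the paper's linear-algebra check: because the $g_i$ live in pairwise disjoint variable blocks they form a Gr\"obner basis of $(g_1,\dots,g_m)$ (coprime leading terms), so the iterated normal-form reduction you describe is a legitimate and well-defined membership test, with the quotients furnishing the $h_i$.

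The genuine gap is the candidate-generation step, which is the entire substance of the algorithm, and which you leave unresolved while explicitly flagging it as ``the main obstacle.'' Your reduction argument---that $p$ reduces to $g_i\,\bar h_i$ modulo $(g_j)_{j\neq i}$, so $g_i$ should be extractable by eliminating $\bar x_j$, $j\neq i$---is conceptually sound, but you cannot actually run a resultant or GCP on $p$ alone: elimination of $n-\lambda_i$ variables requires a square system, and you do not say what the ``auxiliary system in those blocks'' is. Generic auxiliary polynomials in the $\bar x_j$ would cut $Z(p)$ transversally and would not isolate $g_i$; polynomials that do isolate it would essentially have to be the unknown $g_j$ themselves. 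The paper's construction is precisely the device that closes this circularity: it introduces $mn$ independent copies $\x_j^{(k)}$ of the variables in the first $m$ blocks, imposes the constraint $1-t\det(\MM)=0$ to forbid degenerate (linearly dependent) sets of test points, restricts $F$ to these symbolic test points to get a system of polynomials in $\x_{m+1}$, adds generic linear forms $L_k(\x_{m+1})$, and only then eliminates with GCP---twice---to land in $\QQ[\x_\nu]$. The point is that $F$ restricted to a tuple of points lying on $\cV_1\times\cdots\times\cV_m$ is automatically divisible by $G_{m+1}$, so the system has a common zero, and the eliminant encodes exactly the locus of such tuples. None of this is recoverable from ``apply GCP so that by the Poisson-type product formula $g_i$ divides the output''; that sentence describes the desired conclusion, not a construction that achieves it.

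Two smaller points: the paper deliberately works over $\QQ$ and factors over $\QQ$ because it needs irreducibility over the ground field rather than absolute irreducibility, a distinction your proposal (which stays over $\CC$) glosses over; and your side remark that $g_i$ ``must appear as an irreducible factor of $p$ viewed inside $K_i[\bar x_i]$'' is not quite right---irreducibility of $g_i$ over $\CC$ does not give irreducibility over $K_i=\mathrm{Frac}(A_i)$, and in the degenerate case $p\in(g_j:j\neq i)$ the image of $p$ is zero and $g_i$ does not appear at all (though in that case $g_i$ is not needed for the decomposition, so this does not break the algorithm, only the stated justification).
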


Theorem \ref{symbolicalgorithm} solves the following problem:  Given $p \in \mathbb{C}[x_1,x_2,\ldots,x_n]$, are there irreducible hypersurfaces $V_i \subset \mathbb{C}^{\lambda_i}$ such that $V_1 \times V_2 \times \ldots \times V_m \subset Z(p)$?  This covers all $\lambda$-reducible polynomials only if $\lambda=(2,2,\ldots,2)$.  For the moment, we leave it as on open problem to develop an algorithm that detects all $\lambda$-reducible polynomials for arbitrary $\lambda$.

The algorithm \textsc{is\_decomposable}
(Alg.~\ref{alg:is_decomposable}) uses some standard tools from
computer algebra, namely multivariate resultants, Canny's generalized
characteristic polynomial \cite{Canny-GCP-90}, and regular
sequences. We introduce these algebraic tools in the preliminaries
(Section~\ref{sec:-alg-prelim}).  We are now ready to state our main
result.
\begin{theorem}[Multivariate Schwartz-Zippel Lemma]
  \label{thm-SZ}
  Let $\lambda=(\lambda_1,\lambda_2,\ldots,\lambda_m)$ be an
  $m$-partition of $n$, let $S_i \subseteq \mathbb{C}^{\lambda_i}$ be
  finite sets, and let $S:=S_1 \times S_2 \times \ldots \times S_m$ be
  the multi-grid defined by $S_i$. Then, for a $\lambda$-irreducible
  polynomial $p$ of degree $d \geq 2$ and for every $\varepsilon >0$
  we have
  \[
    \abs{Z(p) \cap S} = O_{n,d,\varepsilon} \left( \prod_{i=1}^m
      \abs{S_i}^{1-\frac{1}{\lambda_i+1}+\varepsilon} + 
      \sum_{i=1}^m \prod_{j \neq i} \abs{S_j} \right),
  \]
  where the $O_{n,d,\varepsilon}$  hides constants depending on $\varepsilon,d$ and $n$.
\end{theorem}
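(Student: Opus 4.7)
My plan is strong induction on $m$; the structural input of $\lambda$-irreducibility is used at every step to forbid positive-dimensional coordinate-aligned products inside $Z(p)$. The first reduction controls \emph{bad slices}. For each block $i$ and each $s \in S_i$, let $p_s := p|_{\bar{x}_i = s}$ and $B_i := \{s \in S_i : p_s \equiv 0\}$. The Zariski closure in $\mathbb{C}^{\lambda_i}$ of all such $s$ is cut out by the simultaneous vanishing of the coefficients of $p$ regarded as a polynomial in the remaining blocks with coefficients in $\mathbb{C}[\bar{x}_i]$. A positive-dimensional component $W$ of this locus would give $\mathbb{C}^{\lambda_1} \times \cdots \times W \times \cdots \times \mathbb{C}^{\lambda_m} \subseteq Z(p)$, contradicting $\lambda$-irreducibility; so the locus is zero-dimensional and Bezout yields $|B_i| \leq d^{\lambda_i}$. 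Points of $Z(p) \cap S$ with $s_i \in B_i$ for some $i$ therefore contribute at most $\sum_i d^{\lambda_i} \prod_{j \neq i} |S_j|$, comfortably absorbed by the stated error term.

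\emph{Base case $m = 2$.} After removing bad slices, each $s_2 \in S_2 \setminus B_2$ defines a hypersurface $V_{s_2} = Z(p_{s_2}) \subseteq \mathbb{C}^{\lambda_1}$ of degree $\leq d$, and symmetrically on the other side. I apply a K\H{o}v\'ari--S\'os--Tur\'an-type argument to the bipartite incidence graph on $(S_1 \setminus B_1) \times (S_2 \setminus B_2)$ with edges $\{(s_1,s_2) : p(s_1,s_2)=0\}$. The exponents $1 - 1/(\lambda_i+1)$ arise from $K_{a_1, a_2}$-freeness with $a_i$ depending only on $d$ and $\lambda_i$: a forbidden $K_{a_1,a_2}$ would produce a grid $A_1 \times A_2 \subseteq Z(p)$ with $|A_i|$ large enough that, by the degree-size inequality $|A_i| \geq \binom{\deg A_i - 1 + \lambda_i}{\lambda_i}$ from \Cref{sec:mCN}, one has $\deg A_i > d_i$; \Cref{combnull} then forces $p$ to vanish on a positive-dimensional product, contradicting $\lambda$-irreducibility. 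Standard K\H{o}v\'ari--S\'os--Tur\'an then yields the desired main term with the usual $\varepsilon$ loss.

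\emph{Inductive step $m \geq 3$.} I view $\bar{x}_m$ as a parameter space: for each $s_m \in S_m \setminus B_m$ the slice $p_{s_m}$ is a polynomial in $n - \lambda_m$ variables of degree $\leq d$. The key claim is that the exceptional set $E_m := \{s_m \in S_m : p_{s_m} \text{ is } (\lambda_1,\ldots,\lambda_{m-1})\text{-reducible}\}$ is contained in a proper subvariety of $\mathbb{C}^{\lambda_m}$ of degree polynomially bounded in $d$: the preservation of $\lambda$-reducibility under projection, emphasised in the comparison with previous work, lets one glue a positive-dimensional family of slice-reducibility witnesses into a product inside $Z(p)$, contradicting $\lambda$-irreducibility. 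For $s_m \notin B_m \cup E_m$, the induction hypothesis applies to $p_{s_m}$. A naive sum over $s_m$ produces a factor $|S_m|$ rather than $|S_m|^{1 - 1/(\lambda_m+1)+\varepsilon}$, so I combine the inductive estimate with a further two-block K\H{o}v\'ari--S\'os--Tur\'an application in the coarse partition $(n - \lambda_m, \lambda_m)$, interpolating between the two directions to recover the missing exponent.

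\emph{Main obstacle.} The hardest part is quantifying the exceptional set $E_m$ and propagating $\lambda$-irreducibility down the induction: slices of $\lambda$-irreducible polynomials need not be irreducible with respect to the coarser partition, and an effective elimination-theoretic bound is required to control how often they fail. The tools of \Cref{sec:-alg-prelim} (multivariate resultants and Canny's generalised characteristic polynomial) provide exactly this, and their iterated use along the $m$ blocks is the plausible source of the giant $d^{2n^4}$ factor in the error term.
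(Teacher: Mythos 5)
Your outline identifies some of the right ingredients (the ``bad slice'' sets $B_i$, the role of Lemma~\ref{witness}-type arguments in forbidding large grids in $Z(p)$, the need to control an exceptional set of reducible slices), but the central claim of the base case is false and this gap propagates through the whole plan. You assert that ``Standard K\H{o}v\'ari--S\'os--Tur\'an then yields the desired main term'' $|S_1|^{1-\frac{1}{\lambda_1+1}+\varepsilon}|S_2|^{1-\frac{1}{\lambda_2+1}+\varepsilon}$. It does not. A $K_{a_1,a_2}$-free bipartite graph on $S_1\times S_2$ satisfies the one-sided bound $O\!\left(|S_1|\,|S_2|^{1-1/a_2}+|S_2|\right)$ (or the symmetric counterpart), and no choice of $a_1,a_2$ nor any averaging of the two one-sided bounds produces a genuine product of sublinear powers in \emph{both} factors. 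In the balanced regime $|S_1|=|S_2|=N$ this is $N^{2-1/a_2}$ versus the claimed $N^{2-\frac{1}{\lambda_1+1}-\frac{1}{\lambda_2+1}}$, which is strictly smaller. That extra gain is exactly the Szemer\'edi--Trotter-type improvement, and the only known route to it here is polynomial partitioning. The paper's Lemma~\ref{trivial} is precisely the KST-type input and lands exactly on the one-sided form; the balanced exponents are then extracted in Proposition~\ref{SZJJ} by embedding into $\RR^{2n_i}$, invoking the Guth--Katz partitioning polynomial (Proposition~\ref{GK}), the connected-component bound (Theorem~\ref{SS}), Noether normalization, and a double induction on $|S_1|+|S_2|$ and on $n$. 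None of these appear in your proposal.

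The same defect infects your inductive step: you again call on a ``two-block K\H{o}v\'ari--S\'os--Tur\'an application in the coarse partition $(n-\lambda_m,\lambda_m)$, interpolating between the two directions,'' which cannot deliver the missing exponent for the same reason. The paper does not induct on the number of blocks $m$ at all: Lemma~\ref{partition} cuts each $S_i$ into $O(d^{2\lambda_i})$ classes inside which a uniform B\'ezout condition holds, and the $m=2$ mechanism is run on all blocks simultaneously (with $\lambda$-reducibility being preserved under the coordinate projections used inside the induction on $n$). You also correctly flag as ``the hardest part'' the control of $E_m$ and the propagation of irreducibility to slices, but your proposed fix (elimination theory on the coefficient locus) is left entirely speculative, and your appeal to \Cref{combnull} to extract a positive-dimensional product from a large vanishing grid is misplaced: the needed statement is \Cref{witness}, which rests on the overdetermined affine B\'ezout bound of \Cref{bezout}, not on the multivariate Combinatorial Nullstellensatz. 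In short, the structural bookkeeping is sensible, but the engine that is supposed to produce the balanced exponents is missing.
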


We dedicate Section~\ref{sec:proof-mSZ} to the proof of this result.

\subsection{Applications in Combinatorial Geometry}
\label{sec:appl-CG}

A gem in incidence geometry is the Szemer\'edi-Trotter theorem on
the number of incidences between points and lines in the real plane
\cite{szemeredi-trotter}. We recover this theorem in the complex plane
except for an $\varepsilon$ in the exponent (this complex version
seems to be first proved by T{\'o}th \cite{toth} without the $\varepsilon$'s).
\begin{corollary}[Complex Szemer\'edi-Trotter Theorem]
  \label{cor:ST-in-C}
  Let $P$ be a set of points and  $L$  a set of lines in the complex plane $\mathbb{C}^2$.
  If $\mathcal{I}(P,L)$ is the set of point-line incidences, then it holds
  \[
    \abs{\mathcal{I}(P,L)} = O_{\varepsilon}\left(
      \abs{P}^{\frac{2}{3}+\varepsilon}\abs{L}^{\frac{2}{3}+\varepsilon}
      + \abs{P} + \abs{L} \right) .
  \]
\end{corollary}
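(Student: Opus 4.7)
The plan is to encode point--line incidences as zeros of a single bilinear polynomial on a $(2,2)$-multi-grid and then invoke Theorem~\ref{thm-SZ} with $\lambda=(2,2)$. First I would separate $L$ into vertical and non-vertical lines: the vertical lines contribute at most $\abs{P}$ incidences, since each point in $\mathbb{C}^2$ lies on exactly one vertical line, and this contribution is absorbed into the $\abs{P}+\abs{L}$ error term. Every non-vertical line is uniquely of the form $y = mx + c$, giving a bijection between such lines and a set $S_2 \subset \mathbb{C}^2$ with $\abs{S_2} \leq \abs{L}$; set $S_1 := P \subset \mathbb{C}^2$. With $f(a,b,m,c) := ma - b + c$ of total degree $d=2$ in $n=4$ variables, a point $(a,b) \in P$ lies on the line parameterized by $(m,c)$ if and only if $f(a,b,m,c)=0$, so the number of non-vertical incidences equals $\abs{Z(f) \cap (S_1 \times S_2)}$.

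The only substantive step is verifying that $f$ is $(2,2)$-irreducible in the sense of Definition~\ref{def:lambda-irr}, which is what unlocks Theorem~\ref{thm-SZ}. Suppose toward contradiction that $V_1 \times V_2 \subseteq Z(f)$ with $V_1 \subseteq \mathbb{C}^2$ (in the $(a,b)$ block) and $V_2 \subseteq \mathbb{C}^2$ (in the $(m,c)$ block) both positive-dimensional. If $\dim V_1 = 2$, then for any fixed $(m,c) \in V_2$ the polynomial $ma - b + c$ vanishes identically in $(a,b)$; but its coefficient of $b$ is $-1$, a contradiction. The case $\dim V_2 = 2$ is symmetric, using that the coefficient of $c$ equals $1$. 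Otherwise $V_1$ and $V_2$ are both curves: fixing any $(m_0,c_0) \in V_2$ forces $V_1 \subseteq \{b = m_0 a + c_0\}$, and since $V_1$ has a one-dimensional component and the right-hand side is an irreducible one-dimensional variety, $V_1$ must equal this line. Evaluating $f$ at two distinct points of $V_1$ while varying $(m,c) \in V_2$ then yields a $2 \times 2$ linear system forcing $(m,c) = (m_0,c_0)$, so $V_2$ is a single point, contradicting positive-dimensionality.

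Granted $(2,2)$-irreducibility, I would apply Theorem~\ref{thm-SZ} with $n=4$, $m=2$, $\lambda=(2,2)$, $d=2$. Using $1 - \frac{1}{\lambda_i+1} = \frac{2}{3}$, $2m+1 = 5$, and $2n^4 = 512$, the bound reads
\[
  \abs{Z(f) \cap (S_1 \times S_2)} = O_\varepsilon\!\left( 2^{5}\, \abs{P}^{2/3+\varepsilon} \abs{L}^{2/3+\varepsilon} + 2^{512}(\abs{P}+\abs{L}) \right),
\]
which after absorbing the absolute constants into $O_\varepsilon$ and adding the $\leq \abs{P}$ vertical contribution gives the claimed estimate. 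I expect the only real obstacle to be the $(2,2)$-irreducibility check: this is precisely the point at which the framework could fail for other incidence problems (one must choose a polynomial encoding that is not trivially decomposable across the two coordinate blocks), but for the bilinear point--line polynomial the verification is immediate from the fact that the coefficients of the pure linear monomials $b$ and $c$ are nonzero.
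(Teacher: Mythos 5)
Your proof is correct and follows essentially the same route as the paper: encode point--(non-vertical/non-horizontal) line incidences via a bilinear polynomial on a $(2,2)$-multi-grid, apply Theorem~\ref{thm-SZ} with $\lambda=(2,2)$ and $d=2$, and absorb the excluded class of lines into the linear error term. The only difference is that the paper uses the polynomial $x_1+x_2x_3+x_4$ and merely asserts $(2,2)$-irreducibility as ``easy,'' whereas you use $ma-b+c$ and actually carry out the irreducibility verification, which is a welcome addition.
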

\begin{proof}
  Let $p(x_1,x_2,x_3,x_4)=x_1+x_2x_3+x_4$. It is easy to prove $p$ is  $(2,2)$-irreducible. Observe that for a given point $z=(z_1,z_2)\in\mathbb{C}^2$ and a line $x+ay+b=0$ with non-zero slope, we have an incidence between $z$ and the line if and only if $p(z_1,z_2,a,b)=0$. Theorem \ref{thm-SZ} gives the bound above for point-line incidences for a set of lines with non-zero slopes. There are at most $\abs{P}$ incidences between points and zero-slope lines, so the claimed bound holds for any set of lines $\abs{L}$.
\end{proof}
Our next application is inspired by the unit distances theorem of
Spencer, Szemer\'edi and Trotter \cite{spencer}. This classical result
shows that for a given set of points $P$ in the real plane, the number
of pairs in $P \times P$ that has a unit distance admits a
sub-quadratic upper bound. The next corollary shows that a similar
statement holds for $(n,n)$-irreducible polynomials.
\begin{corollary}
  Let $P$ be a finite set of points in $\mathbb{C}^n$ and
  consider a degree $d$ polynomial  
  $f \in \mathbb{C}[x_1,\ldots,x_n,y_1,\ldots,y_n]$,
  such that $q(x,y)=f(x,y)-1$ is $(n,n)$-irreducible. Then, we have
  \[
    \abs{ \{ (u,v) \in P \times P : f(u,v) =1 \} } = O_{n,d,\varepsilon}
    \left(  \abs{P}^{2-\frac{2}{n+1}+\varepsilon}  +  \abs{P} \right) ,
  \]
  where $\varepsilon >0$ is arbitrary, and the constant hidden in
  $O_{n,d,\varepsilon}$ depends on $\varepsilon$, $d$, and $n$.
\end{corollary}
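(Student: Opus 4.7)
The plan is to reduce the counting problem directly to Theorem~\ref{thm-SZ} applied with the partition $\lambda = (n,n)$. First I would set $q(x,y) := f(x,y) - 1 \in \mathbb{C}[x_1,\ldots,x_n,y_1,\ldots,y_n]$, which has degree exactly $d$ (for $d \geq 1$) and is $(n,n)$-irreducible by hypothesis. The quantity to be bounded is precisely
\[
  \abs{\{(u,v) \in P \times P : f(u,v) = 1\}} = \abs{Z(q) \cap (P \times P)},
\]
so the set being intersected with $Z(q)$ is the multi-grid $S_1 \times S_2$ with $S_1 = S_2 = P$ and $\lambda_1 = \lambda_2 = n$, $m = 2$.

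Next I would invoke Theorem~\ref{thm-SZ} with these parameters. Substituting $m = 2$, $\lambda_1 = \lambda_2 = n$, and $|S_1| = |S_2| = |P|$ into the stated bound yields a leading term
\[
  d^{2m+1} \prod_{i=1}^{2} \abs{S_i}^{1 - \tfrac{1}{\lambda_i + 1} + \varepsilon'}
  \;=\; d^{5}\, \abs{P}^{2 - \tfrac{2}{n+1} + 2\varepsilon'},
\]
and an error term
\[
  d^{2n^4} \sum_{i=1}^{2} \prod_{j \neq i} \abs{S_j} \;=\; 2 d^{2n^4} \abs{P}.
\]
Finally, since $\varepsilon > 0$ is arbitrary, replacing $\varepsilon'$ by $\varepsilon/2$ (and absorbing the harmless factor of $2$ into the $O_{n,\varepsilon}$ constant) gives exactly the claimed estimate
\[
  O_{n,\varepsilon}\!\left( d^{5}\, \abs{P}^{2 - \tfrac{2}{n+1} + \varepsilon} + d^{2n^4} \abs{P} \right).
\]

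There is essentially no obstacle beyond verifying that the hypotheses of Theorem~\ref{thm-SZ} are met; the only subtle point is to note that the $(n,n)$-irreducibility hypothesis is placed on $q = f - 1$ rather than on $f$ itself, which is necessary because $Z(q)$ (not $Z(f)$) is the hypersurface whose intersection with the grid $P \times P$ we are counting. No additional combinatorial or algebraic argument is required: the corollary is a clean specialization of the main theorem to a symmetric two-block partition, and the shape of the exponent $2 - \tfrac{2}{n+1}$ is precisely what the Schwartz--Zippel-type bound produces in this setting.
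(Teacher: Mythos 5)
Your proof is correct and matches the intended argument: the corollary is a direct specialization of Theorem~\ref{thm-SZ} (equivalently Theorem~\ref{SZjunior}) with $m=2$, $\lambda=(n,n)$, $S_1=S_2=P$, applied to $q=f-1$, and since the quantity to bound is exactly $\abs{Z(q)\cap(P\times P)}$ this is plainly the route the paper has in mind even though no written proof is given. One shared notational caveat worth flagging: in Theorem~\ref{thm-SZ} the symbol $n$ denotes the \emph{total} number of variables, which in this corollary equals $2n$, so a literal substitution into the theorem produces an error term $d^{2(2n)^4}\abs{P}$ rather than $d^{2n^4}\abs{P}$; your $d^{2n^4}\abs{P}$ faithfully reproduces the paper's stated bound, which evidently reuses the symbol $n$ to mean the ambient dimension of $P$, but anyone tracking the $d$-exponent precisely should use the larger value.
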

One can also consider the same result for any $\lambda$-irreducible
polynomial.
\begin{corollary}[Repeated Values of Polynomials]
  Let $\lambda=(\lambda_1,\lambda_2,\ldots,\lambda_m)$ be an
  $m$-partition of $n$, let $S_i \subseteq \mathbb{C}^{\lambda_i}$ be
  finite sets, and let $S:=S_1 \times S_2 \times \ldots \times S_m$ be
  the multi-grid defined by $S_i$. Let $p$ be a polynomial of degree
  $d$, assume that $q=p-1$ is a $\lambda$-irreducible
  polynomial. Then, for every $\varepsilon > 0$ we have
  \[ \abs{ \{ x \in S : p(x) =1 \} } = O_{n,d,\varepsilon}\left( \prod_{i=1}^m \abs{S_i}^{1-\frac{1}{\lambda_i+1}+\varepsilon} +  \sum_{i=1}^m  \prod_{j \neq i} \abs{S_j} \right)   .\] 
\end{corollary}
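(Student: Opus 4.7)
The proof plan is essentially a one-line reduction to the Multivariate Schwartz-Zippel Lemma (Theorem~\ref{thm-SZ}). First I would observe that the counting problem translates directly into a zero-set problem: the set $\{x \in S : p(x) = 1\}$ is exactly $Z(q) \cap S$, where $q := p - 1$. Since subtracting a constant does not change the degree for $d \geq 1$, we have $\deg(q) = d$, and by hypothesis $q$ is $\lambda$-irreducible.

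Next I would invoke Theorem~\ref{thm-SZ} applied to $q$ and the multi-grid $S = S_1 \times \cdots \times S_m$. This gives, for every $\varepsilon > 0$,
\[
  \abs{Z(q) \cap S} \;=\; O_{n,\varepsilon}\!\left( d^{2m+1} \prod_{i=1}^m \abs{S_i}^{1-\frac{1}{\lambda_i+1}+\varepsilon} \;+\; d^{2n^4} \sum_{i=1}^m \prod_{j \neq i} \abs{S_j} \right).
\]
Because the corollary states its bound with the implicit constant hidden in $O_{n,d,\varepsilon}$ rather than $O_{n,\varepsilon}$, both factors $d^{2m+1}$ and $d^{2n^4}$ can be absorbed into the constant, yielding the claimed bound.

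The only thing to check is the degenerate low-degree range where Theorem~\ref{thm-SZ} is stated under the hypothesis $d \geq 2$. For $d = 0$, $p$ is a constant, so the set $\{x \in S : p(x) = 1\}$ is either empty or all of $S$; in the latter case $q \equiv 0$ is not $\lambda$-irreducible (since $Z(0) = \mathbb{C}^n$ trivially contains any product of positive-dimensional varieties), so this case is ruled out by hypothesis. For $d = 1$, $q$ is a non-zero affine polynomial, and a direct projection argument (or a trivial case of Theorem~\ref{thm-SZ} if one allows it) produces a bound dominated by $\sum_i \prod_{j \neq i} \abs{S_j}$, which is absorbed into the stated estimate. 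There is no genuine obstacle here; the corollary is just a naming/translation of Theorem~\ref{thm-SZ} into the language of repeated values of a polynomial.
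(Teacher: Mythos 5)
Your reduction is exactly the intended argument: since $\{x \in S : p(x)=1\} = Z(q)\cap S$ with $q=p-1$ of the same degree $d$, the bound follows immediately from Theorem~\ref{thm-SZ} applied to $q$, with the $d^{2m+1}$ and $d^{2n^4}$ factors absorbed into the $O_{n,d,\varepsilon}$ constant. The paper states this corollary without a separate proof precisely because this one-line reduction is the whole content, and your remark about the degenerate cases $d\in\{0,1\}$ (where the theorem's hypothesis $d\geq 2$ does not directly apply) is a reasonable piece of extra care that the paper leaves implicit.
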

The following result is not a direct corollary of our main theorem,
but it can be proved by a minimal adaptation of our proof. We include
the result here for purely aesthetic reasons.
\begin{proposition}[Sparse Hypersurface-Point Incidence Theorem]
  \label{cute}
  Let $A= \{ a_1 , a_2, \ldots, a_k \}$ be a set of lattice points in
  $\mathbb{Z}_{\geq 0}^n$ with $\sum_{j=1}^n a_{ij} \leq d$ for all
  $1 \leq i \leq k$. We say a polynomial $f$ is supported in $A$ if
  $ f(x)=\sum_{i=1}^k c_i x^{a_i} $ where $c_i \in \mathbb{C}$ and
  $x^{a_i}:=x_1^{a_{i1}} x_1^{a_{i2}}\ldots x^{a_{in}}$. Let $P$ be a
  set of points in $\mathbb{C}^n$, $L$ be a set of polynomials
  supported with $A$, and let $\mathcal{I}(P,L)$ denote the collection
  of incidences between $P$ and $L$.  We assume for any sets
  $U_1 \subset P$ and $U_2 \subset L$ with $\abs{U_1}>d^n$ and
  $\abs{U_2}>d^k$, $U_1 \times U_2 $ is not included in
  $\mathcal{I}(P,L)$.  Then,
  \[ \abs{\mathcal{I}(P,L)} = O_{n,k,d,\varepsilon}\left( \abs{P}^{1-\frac{1}{n+1}+\varepsilon} \abs{L}^{1-\frac{1}{k+1}+\varepsilon} + \abs{P} + \abs{L} \right) . \]
\end{proposition}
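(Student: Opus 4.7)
The plan is to encode the incidence relation as the zero set of a single polynomial on $\mathbb{C}^{n+k}$ and then invoke (a slight variant of) Theorem~\ref{thm-SZ}. I identify each $f \in L$ with its coefficient vector $c_f = (c_1^f,\ldots,c_k^f) \in \mathbb{C}^k$ and introduce the incidence polynomial
\[
\Phi(x_1,\ldots,x_n, c_1,\ldots,c_k) \ := \ \sum_{i=1}^{k} c_i\, x^{a_i}.
\]
Let $L' \subset \mathbb{C}^k$ be the image of $L$ under the coefficient map. The identity $f(p) = \Phi(p, c_f)$ yields $\abs{\mathcal{I}(P,L)} = \abs{Z(\Phi)\cap (P \times L')}$. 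Moreover $\deg_1 \Phi \leq d$, $\deg_2 \Phi \leq 1$, and $\deg \Phi \leq d+1$.

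I then apply Theorem~\ref{thm-SZ} with $m = 2$, $\lambda = (n,k)$, $S_1 = P$, and $S_2 = L'$. Since $2m+1 = 5$ and the ambient dimension is $n+k$, this produces precisely the target bound
\[
O_{n+k,\varepsilon}\!\left( d^{5}\,\abs{P}^{1-\frac{1}{n+1}+\varepsilon}\abs{L}^{1-\frac{1}{k+1}+\varepsilon} + d^{2(n+k)^4}(\abs{P}+\abs{L}) \right),
\]
where the harmless inflation from $d$ to $d+1 = \deg \Phi$ is absorbed into the implicit constant.

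The sole obstruction to applying Theorem~\ref{thm-SZ} verbatim is that $\Phi$ need not be $(n,k)$-irreducible. Suppose positive-dimensional varieties $V_1 \subset \mathbb{C}^n$ and $V_2 \subset \mathbb{C}^k$ satisfy $V_1 \times V_2 \subseteq Z(\Phi)$. Setting $U_1 := P \cap V_1$ and $U_2 := L' \cap V_2$ yields $U_1 \times U_2 \subseteq \mathcal{I}(P,L)$, so the combinatorial hypothesis of the proposition forces $\abs{U_1} \leq d^n$ or $\abs{U_2} \leq d^k$. In words, every Cartesian product lying inside the incidence graph is combinatorially short on at least one side, which is precisely the property that the proof of Theorem~\ref{thm-SZ} extracts from $\lambda$-irreducibility.

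The main obstacle, then, is purely bookkeeping: one must open the proof of Theorem~\ref{thm-SZ} and, at each step where $(n,k)$-irreducibility is invoked to rule out a large product-substructure inside $Z(\Phi) \cap S$, substitute the bound just derived from the proposition's hypothesis. The resulting extra contributions are $O(d^n \abs{L} + d^k \abs{P})$ per invocation, which is absorbed into the linear error term $d^{2(n+k)^4}(\abs{P}+\abs{L})$. I expect no new conceptual ideas beyond this, in line with the paper's own remark that the proposition follows by a minimal adaptation of the main proof.
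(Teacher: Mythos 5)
Your proposal matches the paper's own proof sketch almost verbatim: you define the same incidence polynomial (the paper calls it $p(x,y)=\sum_i y_i x^{a_i}$, you call it $\Phi$), observe it is not $(n,k)$-irreducible, note that the hypothesis on Cartesian products in $\mathcal{I}(P,L)$ supplies exactly the conclusion of Lemma~\ref{witness} restricted to $P\times L'$, and then appeal to re-running the proof of Theorem~\ref{thm-SZ} with this substitution. The paper's treatment is also only a sketch, so the level of rigor is comparable; one subtlety that both you and the paper gloss over is that $\lambda$-irreducibility is used in Proposition~\ref{SZJJ} not only through Lemma~\ref{witness} but also at the Noether-normalization/projection step (to conclude $Z(h_1)\times Z(h_2)\cap Z(p)$ has codimension $\geq 3$ and that the projected variety $W$ is again $\lambda$-irreducible), so "minimal adaptation" there requires carrying along an analogous combinatorial hypothesis for the projected data, but this is a shared gap in both sketches rather than a defect specific to your argument.
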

\begin{proof}[Proof sketch]
  We define $p(x,y):=\sum_{i=1}^k y_ix^{a_i}$ where
  $y=(y_1,y_2,\ldots,y_k)$ represents polynomials in $L$, and
  $x=(x_1,x_2,\ldots,x_n)$ represents points in $P$. The polynomial
  $p(x,y)$ is not $(n,k)$-irreducible and our main theorem does not
  directly apply. However, the assumption that large Cartesian
  products are not included in $I(P,L)$ forces $p(x,y)$ to behave as
  $(n,k)$-irreducible on the set $P \times L$ (see
  Lemma~\ref{witness}). One can then prove Proposition~\ref{cute} by
  repeating the steps in the proof of Theorem~\ref{thm-SZ} with
  minimal adaptations using the assumption that large Cartesian
  products are not included instead of $(n,k)$-irreducibility.
\end{proof}

\section{Algebraic preliminaries}
\label{sec:-alg-prelim}

In this section we present some tools from computational algebraic geometry, real algebraic geometry, and commutative algebra that will be later used in the symbolic algorithm and in the  proof of Theorem~\ref{thm-SZ}.
\subsection{Resultants and Generalized Characteristic Polynomial}
\label{sec:prelim-resultant}

For a polynomial $f$, respectively a polynomial system $(F)$, we denote by
$Z(f) \subseteq \CC^n$, respectively $Z(F) \subseteq \CC^n$, its zero set.
Suppose a polynomial system $(F)$ consisting of $m$ equations $f_i = 0$ in $n$
variables, where $m \leq n$.
Every non-empty component of $Z(F)$ has dimension at least $n - m$
\cite[Cor.~3.14]{Mumford-AG-I-95}. The \textit{proper} components are the ones
with dimension exactly $n - m$. We call the components of dimension greater than
$n -m$ \textit{excess} components.
From the system $(F)$ we can eliminate $m - 1$ of the $n$ variables and obtain a single
polynomial in the remaining $n -m +1$ variables. This polynomial vanishes if the polynomials
in $(F)$ have a common zero. We call this polynomial \textit{resultant}.
We can think of the resultant as a projection operator.
It projects the algebraic set defined by the input polynomial system
that lives in dimension $n$
to an algebraic set that lives in dimension $n - m + 1$, by eliminating $m+1$ variables.

The resultant is one of the most important tools in (computational) algebraic
geometry and it is well-defined for a square system of homogeneous polynomials. We can
compute it efficiently as the determinant or a non-trivial divisor of the determinant of a matrix, see \cite{GKZ,EmiMou-met-99} and it provides a necessary and sufficient condition for the existence of a solution in the projective space. Things change considerably when one seeks solutions over the affine space. If we homogenize the polynomials to obtain a square homogeneous system and compute the resultant, then the resultant might vanish even when there are no affine solutions. The reason for this is the
presence of solutions at infinity in the projective closure of the affine zero set
or the presence of excess components at infinity.

If the number of the (affine) polynomials is less than or equal to the number of
variables, $m \leq n$, then we can consider only the $m-1$ variables and regard the
rest $n - m + 1$ as parameters. Then, we obtain an affine system of $m$ equations
in $m-1$ variables and we can use usual techniques \cite{GKZ,EmiMou-met-99} to compute the resultant;
which would be a polynomial in $n -m +1$ variables.
However, if there are (affine or projective) excess components, i.e., components of dimension greater
than $n -m$, then the resultant vanishes identically; that is, it is always zero
independently of whether there is a component of dimension $n -m$ or not and
so it gives no information.

Canny \cite{Canny-GCP-90} introduced a projection operator that he called \textit{Generalized
Characteristic Polynomial (GCP)} to overcome the issues of the resultants, in particular to avoid the identical vanishing in the presence of excess components. The idea is to perturb symbolically the
polynomials of the initial system using a new parameter, say $s$, then compute the resultant of the perturbed system and consider the resultant as a polynomial in $s$.
The perturbation is such that it guarantees the new resultant polynomial is not identically
zero. The coefficient of the lowest degree term in $s$ is the projection operator of interest.
If there are no excess components, then  the constant coefficient (with respect to $s$)
is the projection operator and coincides with the classical resultant.

The GCP guarantees that we recover all the proper components of the
intersection, that is the components of the expected dimension; we refer the
reader to \cite[Thm.~3.2]{Canny-GCP-90} for the proof and further details.
However, it might also contain additional proper components, that live in the
excess components; these components can also be projected in the resultant
polynomial, but they do not affect our algorithms. Nevertheless, we can identify
(in a randomized way) the additional components by performing many random
perturbations.
In what follows, for a polynomial system $f_1, \dots, f_m$ with
polynomials in $\CC[x_1, \dots, x_n]$, of degrees $d_1, \dots, d_m$,  the operator
$\Elim(f_1, \dots, f_m : x_1, \dots, x_m)$
realizes the technique of GCP. It eliminates the variables  $x_1, \dots, x_m$ and results a non-identically zero
polynomial $R \in \CC[x_{m+1}, \dots, x_n]$.
It proceeds as follows:
First, it perturbs  symbolically the polynomials,
that is it constructs the polynomials $\hat f_i = s x_i^{d_i} + f_i$ for in $i \in [m]$.
Then it computes the  resultant of $\hat f_1, \dots, \hat f_m$
that eliminates the variables $x_1, \dots, x_m$;
the resultant is $\hat R \in (\CC[x_{m+1}, \dots, x_n])[s]$.
Finally, it returns $R = \mathtt{tcoeff}(\hat R, s) \in \CC[x_{m+1}, \dots, x_n]$,
where $\mathtt{tcoeff}$ returns the trailing coefficient of a polynomial with respect to $s$.

\subsection{Some Basic Algebraic Geometry}
\label{sec:AG}



We begin with an affine version of B\'ezout's
theorem. This result is certainly well-known but we do not know who
was the first person to write it down.  We state the version of the
result we need below. For square systems (the case $m=n$), an affine
B\'ezout's inequality was subject to a well written paper by Schmidt
\cite{schmidt}. The more general version we need can also be proved by
essentially repeating Schmidt's argument, where $u$-resultant in his
proof needs to be replaced by the generalized characteristic
polynomial of Canny. One can also give a proof based on regular
sequences and Schmidt's result for square systems. We will give a
sketch of the latter idea. 
\begin{theorem}[Affine B\'ezout Inequality for Overdetermined
  Systems] \label{bezout}
  Consider the polynomials $p_1,p_2,\ldots,p_m$ in 
  $n$-variables and degree at most $d$. Denote by  $V(p_1,p_2,\ldots,p_m)$ 
  the affine variety defined by $p_i$ in $\mathbb{C}^n$. Further
  assume that
\[ V(p_1,p_2,\ldots,p_m)=V_0 \cup V_1 ,\]
where $V_0$ is a pure zero-dimensional variety, and $V_1$ is either empty set or a positive dimensional variety.  
Then, we have $\abs{V_0} \leq d^n$.
\end{theorem}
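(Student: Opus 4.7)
The plan is to reduce the overdetermined system $(p_1,\ldots,p_m)$ to a square system of $n$ polynomials in $n$ variables, and then apply Schmidt's affine B\'ezout inequality for square systems as a black box. First, observe that each $q \in V_0$ is an isolated point of $V(p_1,\ldots,p_m)$: writing $V(p_1,\ldots,p_m)$ as a union of its irreducible components and grouping them by dimension, the zero-dimensional components (which form $V_0$) are by definition not contained in any higher-dimensional component. In particular $V_0$ is a finite set, disjoint from $V_1$, and we only need to bound its cardinality. Note also that, by the dimension lower bound $n-m$ for components of $V(p_1,\ldots,p_m)$, non-emptiness of $V_0$ forces $m \ge n$, so the reduction below does not lose polynomials.

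I would then pick a generic complex matrix $(\alpha_{ij}) \in \mathbb{C}^{n \times m}$ and form the square subsystem
\[
  g_i := \sum_{j=1}^{m} \alpha_{ij}\, p_j, \qquad 1 \le i \le n,
\]
with $\deg(g_i) \le d$ and $V_0 \subseteq V(p_1,\ldots,p_m) \subseteq V(g_1,\ldots,g_n)$. The crucial step is to show that for a Zariski-generic choice of $(\alpha_{ij})$, every point of $V_0$ is still an \emph{isolated} point of $V(g_1,\ldots,g_n)$. Fix $q \in V_0$ and let $R = \mathcal{O}_{\mathbb{C}^n,q}$ be the regular local ring at $q$, with maximal ideal $\mathfrak{m}_q$. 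Since $q$ is isolated in $V(p_1,\ldots,p_m)$, the ideal $I := (p_1,\ldots,p_m)R$ is $\mathfrak{m}_q$-primary, so its height equals $\dim R = n$. Because $R$ is Cohen--Macaulay with infinite residue field $\mathbb{C}$, a classical consequence of prime avoidance (see e.g.\ Bruns--Herzog or Matsumura) asserts that a generic $\mathbb{C}$-linear combination of any generating set of an $\mathfrak{m}_q$-primary ideal produces a system of parameters for $R$. Applied to our $n$ combinations $g_1,\ldots,g_n$ of $p_1,\ldots,p_m$, this shows that the subideal $J := (g_1,\ldots,g_n)R \subseteq I$ is itself $\mathfrak{m}_q$-primary, so $q$ is isolated in $V(g_1,\ldots,g_n)$. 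As $V_0$ is finite, only finitely many proper Zariski-closed ``bad'' conditions on $(\alpha_{ij})$ must be avoided, so a single generic $(\alpha_{ij})$ works simultaneously at every $q \in V_0$.

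With this reduction in hand, Schmidt's affine B\'ezout inequality for square systems bounds the number of isolated zeros of $(g_1,\ldots,g_n)$ by $\prod_{i=1}^n \deg(g_i) \le d^n$. Since every point of $V_0$ has been shown to contribute to this count, we conclude $|V_0| \le d^n$. The main obstacle is really the local genericity step: one has to know that an $\mathfrak{m}_q$-primary subideal of $I$ generated by exactly $n$ elements can be extracted using constant complex coefficients (rather than arbitrary local-ring coefficients), and that a single generic $(\alpha_{ij})$ realizes this simultaneously at every point of the finite set $V_0$. An alternative route, as hinted after the theorem statement, is to repeat Schmidt's original argument verbatim with his use of the $u$-resultant replaced by Canny's generalized characteristic polynomial from Section~\ref{sec:prelim-resultant}; the regular-sequence strategy above, however, is more self-contained and reuses Schmidt's bound as a black box.
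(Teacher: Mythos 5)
Your proof is correct and reaches Schmidt's square-system bound by a genuinely different reduction than the one sketched in the paper. The paper first passes to the colon ideal $I := (I(V) : I(V_1)) = I(V_0)$, asserts that $I$ is generated by $m$ polynomials of degree at most $d$, and then extracts from linear combinations of those generators a length-$n$ regular sequence whose zero set is a finite complete intersection containing $V_0$. You instead keep the original $p_i$, take $n$ generic $\mathbb{C}$-linear combinations $g_1,\ldots,g_n$, and argue \emph{locally} at each $q \in V_0$: since $(p_1,\ldots,p_m)\mathcal{O}_{\mathbb{C}^n,q}$ is $\mathfrak{m}_q$-primary and the local ring is Cohen--Macaulay with infinite residue field, prime avoidance shows generic combinations form a system of parameters and so keep $q$ isolated in $V(g_1,\ldots,g_n)$; finiteness of $V_0$ then lets a single $(\alpha_{ij})$ work at every $q$. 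This buys you something concrete: you never need to control the degrees of a generating set of the colon ideal $I(V_0)$. The paper's claim that $I(V_0)$ admits $m$ generators of degree at most $d$ is left unjustified in the sketch (colon ideals of degree-bounded ideals need not be generated within the same degree bound), whereas your local argument uses only the obviously degree-$\le d$ polynomials $g_i$ and tolerates $V(g_1,\ldots,g_n)$ having extra, even positive-dimensional, components, since Schmidt's bound applies to isolated zeros alone. Both routes are realizations of the ``regular sequences plus Schmidt'' strategy the paper advertises before the theorem; yours is the more self-contained one.
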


\begin{proof}[Proof Sketch]
  Let $I:=\left( I(V(p_1,p_2,\ldots,p_m)):I(V_1) \right)$ be the ideal defined by the saturating the radical ideal $I(V)$ with $I(V_1)$. By definition $I=I(V_0)$, and also
  $I= \langle \tilde{f}_1, \tilde{f}_2,\ldots, \tilde{f}_m \rangle$ for some $\tilde{f}_i$ of degree at most $d$. Since $V_0$ is zero dimensional, $I= \langle \tilde{f_1},\tilde{f_2},\ldots,\tilde{f_m} \rangle$ has depth $n$; thus we can find a regular sequence 
$ \tilde{f}_{\sigma(1)}, \tilde{f}_{\sigma(2)}, \ldots , \tilde{f}_{\sigma(n)} $ that generates the ideal $I$, and are given by linear combinations of $\tilde{f}_i$. So, $V_0$ is included  in $V(\tilde{f}_{\sigma_1},\tilde{f}_{\sigma_2},\ldots,\tilde{f}_{\sigma_{n}})$, and the cardinality of this set is bounded above by $d^n$ due to the affine B\'ezout inequality.
\end{proof}

Similar results to \cref{bezout} can be found in \cite{VogPat-rit-83}. For a nice and elementary exposition on \cref{bezout} we refer the reader to Tao's blog post \cite{Tao-bezout}.
Thm.~\ref{bezout} leads to a simple criterion that needs to be satisfied by all $\lambda$-irreducible polynomials.
\begin{lemma}
\label{finite-witness}
Let $C\subset\CC^n$ be a Zariski closed subset and $d\geq 1$ be an integer. There exists a finite subset $U\subset C$ such that any degree $d$ polynomial that vanishes on $U$ also vanishes on $C$.	
\end{lemma}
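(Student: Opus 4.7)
The plan is to exploit the finite-dimensionality of the space of polynomials of bounded degree and build $U$ greedily. Let $V \subset \mathbb{C}[x_1,\ldots,x_n]$ denote the $\mathbb{C}$-vector space of polynomials of degree at most $d$, so $N := \dim V = \binom{n+d}{n}$ is finite. For any $T \subseteq \mathbb{C}^n$, write $W(T) := \{f \in V : f|_T \equiv 0\}$, which is a linear subspace of $V$. The goal then reformulates as: find a finite $U \subset C$ with $W(U) = W(C)$, since any degree $d$ polynomial is in particular an element of $V$.

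The construction is iterative. Set $U_0 := \emptyset$, so that $W(U_0) = V$. Given a finite $U_k \subset C$, if $W(U_k) = W(C)$ then stop and output $U := U_k$. Otherwise there exists $f_k \in W(U_k) \setminus W(C)$; since $f_k$ does not vanish identically on $C$, some point $x_{k+1} \in C$ satisfies $f_k(x_{k+1}) \neq 0$. Put $U_{k+1} := U_k \cup \{x_{k+1}\}$. By construction $f_k \in W(U_k)$ but $f_k \notin W(U_{k+1})$, so the inclusion $W(U_{k+1}) \subsetneq W(U_k)$ is strict and $\dim W(U_{k+1}) \leq \dim W(U_k) - 1$.

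Since the dimensions $\dim W(U_k)$ form a strictly decreasing sequence of non-negative integers bounded above by $N$, the procedure terminates after at most $N$ steps and outputs a finite set $U \subset C$ with $|U| \leq N$ and $W(U) = W(C)$. In particular, every degree $d$ polynomial vanishing on $U$ vanishes on all of $C$, as required. There is essentially no obstacle here; the argument is the finite-dimensional analogue of the Noetherian descending chain condition, and the only subtle point is observing that the witness $x_{k+1}$ in the greedy step exists automatically as soon as $W(U_k) \neq W(C)$, directly from the definition of $W(\cdot)$.
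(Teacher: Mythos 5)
Your proof is correct and takes essentially the same approach as the paper's: both exploit the finite-dimensionality of the space of degree-$\leq d$ polynomials, with the paper choosing a finite $U\subset C$ minimizing $\dim I_d(U)$ and observing that adjoining any $x\in C$ cannot lower the dimension further, while you build $U$ greedily by adding witness points until the vanishing subspace stabilizes. These are two standard packagings of the same descent argument; your version has the minor bonus of making the bound $\abs{U}\leq\binom{n+d}{n}$ explicit.
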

\begin{proof}
Set $\mathcal{P}_d\coloneqq\{p\in\CC[x_1,x_2,\dots,x_n]\mid\deg(p)\leq d\}$ and for a subset $S\subset\CC^n$, define $I_d(S)\coloneqq\{p\in\mathcal{P}_d\mid p(s)=0, \forall x\in S\}$. Let $U\subset C$ be a finite subset with minimal $\dim I_d(U)$. Then, for each $x\in C$ we have $I_d(U)=I_d(U\cup\{x\})$. In particular, if $p\in I_d(U)$ then $p\in I_d(U\cup\{x\})$ so $p(x)=0$. As $x\in C$ is arbitrary, we deduce that $p$ vanishes on $C$; thus, $I_d(U)=I_d(C)$.
\end{proof}

Thm.~\ref{bezout} leads to a simple criterion that needs to be satisfied by all $\lambda$-irreducible polynomials.
\begin{lemma}
  \label{witness}
  Let $\lambda=(\lambda_1, \lambda_2, \ldots, \lambda_m)$ be a partition of $n$,
  i.e., $n=\lambda_1 + \ldots + \lambda_m$. Let
  $U_i \subset \mathbb{C}^{\lambda_i}$ be finite sets with
  $\abs{U_i} > d^{\lambda_i}$ for some integer $d \geq 1$. Let $p$ be a polynomial of degree
  $d$ in $n$ variables. If $p$ vanishes entirely on the set
  \[ U_1 \times U_2 \times \ldots \times U_{m} \]
  then $p$ is $\lambda$-reducible.
\end{lemma}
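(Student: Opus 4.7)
The strategy is to build the witnessing varieties $V_1,\dots,V_m$ by induction on the block index, peeling off one factor at a time with the help of Theorem \ref{bezout}. The invariant I will maintain is that after step $k$ there exist positive-dimensional varieties $V_i\subseteq\mathbb{C}^{\lambda_i}$, $i\leq k$, such that
\[
V_1\times\cdots\times V_k\times U_{k+1}\times\cdots\times U_m \;\subseteq\; Z(p).
\]
The case $k=0$ is the hypothesis of the lemma, and I want to reach $k=m$, at which point the conclusion that $p$ is $\lambda$-reducible is immediate from the definition.

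For the inductive step, I fix arbitrary points $v_j\in V_j$ for $j\leq k$ and $u_j\in U_j$ for $j\geq k+2$, and consider the specialised polynomial
\[
q_{v,u}(\bar{x}_{k+1})\;:=\;p\bigl(v_1,\dots,v_k,\bar{x}_{k+1},u_{k+2},\dots,u_m\bigr),
\]
which has degree at most $d$ in the $\lambda_{k+1}$ variables $\bar{x}_{k+1}$ and, by the invariant, vanishes on $U_{k+1}$. Let $W\subseteq\mathbb{C}^{\lambda_{k+1}}$ be the common zero set of the entire family $\{q_{v,u}\}$ as $(v,u)$ ranges over $V_1\times\cdots\times V_k\times U_{k+2}\times\cdots\times U_m$; then $U_{k+1}\subseteq W$. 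Since $\mathbb{C}[\bar{x}_{k+1}]$ is Noetherian, the ideal generated by this family is generated by finitely many of its elements, so $W$ coincides with the zero set of some finite subsystem $q_{v^{(1)},u^{(1)}},\dots,q_{v^{(r)},u^{(r)}}$ of degree at most $d$ in $\lambda_{k+1}$ variables.

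Now I decompose $W=W_0\cup W_{\geq 1}$ into its pure zero-dimensional part and the union of its positive-dimensional components. Theorem \ref{bezout} applied to the finite subsystem above gives $|W_0|\leq d^{\lambda_{k+1}}<|U_{k+1}|$, so at least one point $u_{k+1}\in U_{k+1}$ must lie on $W_{\geq 1}$. Taking $V_{k+1}$ to be any irreducible positive-dimensional component of $W$ through $u_{k+1}$, the inclusion $V_{k+1}\subseteq W$ translates into the vanishing of $p$ on $V_1\times\cdots\times V_{k+1}\times U_{k+2}\times\cdots\times U_m$, which closes the induction. After $m$ iterations I obtain positive-dimensional $V_1,\dots,V_m$ with $V_1\times\cdots\times V_m\subseteq Z(p)$, establishing $\lambda$-reducibility.

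The main technical point to be careful about is the Noetherian reduction from the family $\{q_{v,u}\}$, which is parametrised by the possibly positive-dimensional set $V_1\times\cdots\times V_k\times U_{k+2}\times\cdots\times U_m$ and is therefore genuinely infinite once $k\geq 1$, to a finite subsystem to which Theorem \ref{bezout} literally applies. Once this reduction is in place, the induction is clean: the degree bound $d$ never grows, and the Bezout estimate at each step only depends on the local dimension $\lambda_{k+1}$, so the thresholds $|U_i|>d^{\lambda_i}$ in the hypothesis are exactly what is needed, one per step.
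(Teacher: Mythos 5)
Your proof is correct, and the overall strategy is the same as the paper's: peel off the blocks one at a time, applying the affine B\'ezout inequality (Theorem~\ref{bezout}) in each block to replace a large finite set $U_{k+1}$ by a positive-dimensional variety. The two arguments diverge only in how they cope with the fact that once $V_1,\dots,V_k$ are positive-dimensional the ``parameter set'' for the specialised polynomials $q_{v,u}$ is infinite. The paper introduces a separate finite-witness lemma (Lemma~\ref{finite-witness}): it extracts a finite $\tilde U_j\subset C_1\times\cdots\times C_{j-1}$ such that vanishing of a degree-$d$ polynomial on $\tilde U_j$ forces vanishing on the whole product, peels off the next block over the finite grid $\tilde U_j\times U_{j+1}\times\cdots$, and then ``upgrades'' $\tilde U_j$ back to $C_1\times\cdots\times C_{j-1}$ in a final step. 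You instead form the common zero set $W$ of the entire (possibly infinite) family $\{q_{v,u}\}$ and invoke Noetherianness of $\CC[\bar x_{k+1}]$ to cut $W$ out by finitely many members of the family, which lets Theorem~\ref{bezout} apply directly and avoids the separate upgrade step. Both mechanisms are instances of the same finiteness principle (indeed, since every $q_{v,u}$ has degree at most $d$, one can even replace the Noetherian argument by the finite-dimensionality of the space of degree-$\leq d$ polynomials). Your packaging is somewhat more streamlined, trading the paper's auxiliary Lemma~\ref{finite-witness} and its bookkeeping for one explicit Noetherian reduction inside the inductive step. One minor point worth making explicit: the decomposition $W=W_0\cup W_{\geq 1}$ is intrinsic to the variety $W$, so it does not matter which finite subsystem you use; the bound $|W_0|\leq d^{\lambda_{k+1}}$ from Theorem~\ref{bezout} is therefore unambiguous.
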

\begin{proof}
  For every point $y \in \mathbb{C}^{n-\lambda_1}$, we define a polynomial $p_y$
  in $\lambda_1$ variables by setting $p_y(x)=p(x,y)$. We know that
  \[ \abs{\cap_{y \in U_2 \times U_3 \times \ldots \times U_m}   V(p_y) } \geq \abs{U_1} > d^{\lambda_1}   .\]

  By \cref{bezout}, there exists a positive dimensional variety $C_1 \subset \mathbb{C}^{\lambda_1}$ such that
  \[ C_1 \times U_2 \times U_3 \times \ldots \times U_m \subset V(p) . \]
  
  We use \cref{finite-witness} to obtain a finite subset $\tilde{U}_1\subset C_1$ with the property that any degree $d$ polynomial that vanishes on $\tilde{U}_1$  also vanishes on $C_1$. 

  Now for
  every point $y \in \tilde{U}_1 \times U_3 \times U_4 \times \ldots \times U_m$, we
  consider a polynomial $p_y$ in $\lambda_2$ variables by setting
  $p_y(x)=p(x,y)$, and we repeat the argument above using \cref{bezout}. This
  shows the existence of a positive dimensional variety $C_2$ with
  \[ \tilde{U}_1 \times C_2 \times U_3 \times \ldots \times U_m \subset V(p) .\] On the other hand, if $y\in C_2\times U_3\times\ldots\times U_m$, then $p_y$ vanishes on $\tilde{U}_1$ and has degree at most $d$. By the defining property of $\tilde{U}_1$, we have $C_1\subset V(p_y)$, and hence \[
  C_1\times C_2\times U_3\times \ldots\times U_m\subset V(p).
  \]
  
  We can iterate this argument: At the $i$th step, we pick a finite subset $\tilde{U}_i\subset C_1\times C_2\times \ldots\times C_{i-1}$ with the desired property using \cref{finite-witness}, we then observe that for any point $y\in \tilde{U}_i\times U_{i+1}\times\ldots \times U_{m}$, $p_y$ vanishes on $U_{i}$. Then, using \cref{bezout}, we deduce that $p_y$ vanishes on a positive dimensional variety $C_i$. But now, for each $y\in C_i\times U_{i+1}\times\ldots\times U_m$, $p_y$ vanishes on $\tilde{U}_i$; therefore, \[C_1\times C_2\times \ldots\times C_i\times U_{i+1}\times \ldots\times U_m\subset V(p). \] After the $m$th step, we have constructed positive dimensional varieties $C_1,C_2,\dots,C_m$ such that $C_1\times C_2\times\ldots\times C_m\subset V(p)$ and obtained the desired result. 
\end{proof}

\subsection{Tools from Real Algebraic Geometry and Polynomial Partitioning}
We first present a useful tool invented by Guth and Katz in their
solution to Erd\H{o}s distinct distances problem \cite{guth-katz}, see \cite{tao-solymosi} for a nice exposition (in particular Corollary 5.3).
\begin{proposition}[Polynomial Partitioning Lemma] \label{GK}
Let $Q \subset \mathbb{R}^n$ be a finite set of points, and let $d \geq 2$ be an integer. Then, there exist a polynomial $p \in \mathbb{R}[x_1,x_2,\ldots,x_n]$ with degree at most $d$ and a partition   
\[ \mathbb{R}^n =  Z(p) \cup \Omega_1 \cup \Omega_2 \cup \ldots \cup \Omega_M ,\]
such that the boundary of each set $\Omega_i$ is in $Z(p)$, and $\abs{Q \cap \Omega_i} \leq \abs{Q} / d^n$, for all $i=1,2,\ldots,M$. 
\end{proposition}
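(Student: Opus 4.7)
The plan is to follow the now-classical argument of Guth and Katz: iteratively bisect the point set with polynomials produced by a polynomial ham-sandwich theorem, and then take the product of these polynomials.

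First I would recall the polynomial ham-sandwich theorem: given any $k$ finite sets $A_1,\dots,A_k \subset \mathbb{R}^n$, there exists a nonzero polynomial $q\in\mathbb{R}[x_1,\dots,x_n]$ of degree at most $r$, where $r$ is chosen so that $\binom{r+n}{n}>k$, whose zero set simultaneously bisects each $A_i$, in the sense that neither of the two open sign conditions $\{q>0\}$, $\{q<0\}$ contains more than $|A_i|/2$ points of $A_i$. This is proved by considering the Veronese embedding $v:\mathbb{R}^n\to\mathbb{R}^{N}$ with $N=\binom{r+n}{n}-1$ and applying the Borsuk–Ulam theorem to the continuous odd map $S^N\to\mathbb{R}^k$ whose $i$-th coordinate records the signed difference of the $A_i$-masses on the two half-spaces determined by a hyperplane in $\mathbb{R}^N$; pulling back via $v$ yields the desired polynomial. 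In particular, taking $r=O(k^{1/n})$ suffices.

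Next I would iterate. Set $Q_0:=\{Q\}$. At step $j\geq 1$, suppose we have a family $Q_{j-1}$ of at most $2^{j-1}$ finite sets obtained from $Q$ by bisecting along the previously chosen polynomials. Apply the polynomial ham-sandwich theorem to simultaneously bisect all of them, producing a polynomial $p_j$ of degree $O(2^{j/n})$ whose zero set meets each previous cell and splits each $Q_{j-1}$-set in half. The refined collection $Q_j$ consists of at most $2^j$ sets, each of cardinality at most $|Q|/2^j$, indexed by the sign patterns of $p_1,\dots,p_j$. Choosing $j:=\lceil n\log_2 d\rceil$ yields $2^j\geq d^n$, hence every resulting set has cardinality at most $|Q|/d^n$.

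Finally I would define $p:=p_1p_2\cdots p_j$; the open connected components $\Omega_i$ of $\mathbb{R}^n\setminus Z(p)$ are (unions of) the sign-pattern cells, so each satisfies $|Q\cap\Omega_i|\leq |Q|/d^n$, and by construction $\partial\Omega_i\subset Z(p)$. The degree of $p$ is
\[
\deg(p)=\sum_{i=1}^{j}\deg(p_i)=O\!\left(\sum_{i=1}^{j}2^{i/n}\right)=O(2^{j/n})=O(d),
\]
where the geometric series telescopes to a constant multiple of its largest term. To obtain the clean statement with bound $d$ rather than $O_n(d)$ one tunes the constants (e.g.\ by replacing $d$ with $d/c_n$ at the outset for the implicit constant $c_n$ produced above, which only affects the hidden constant in applications). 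The main obstacle is the ham-sandwich step, which is what forces the use of a topological input (Borsuk–Ulam) rather than purely algebraic tools; the rest is careful bookkeeping of degrees and cell counts.
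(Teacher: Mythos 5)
The paper does not prove this lemma itself; it cites Guth--Katz and the exposition in Solymosi--Tao (Corollary 5.3). Your proposal is precisely that standard proof: the polynomial ham-sandwich theorem via the Veronese embedding and Borsuk--Ulam, applied iteratively to bisect the cells, with the final partitioning polynomial taken to be the product of the bisecting polynomials. So you have reproduced the cited argument rather than found a new one.

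Two small points worth correcting. First, the sentence ``the open connected components $\Omega_i$ of $\mathbb{R}^n\setminus Z(p)$ are (unions of) the sign-pattern cells'' has the containment backwards: each sign-pattern cell of $(p_1,\ldots,p_j)$ is a union of connected components of $\mathbb{R}^n\setminus Z(p)$, not the other way around. What you actually need, and what is true, is that each $\Omega_i$ lies entirely inside a single sign-pattern cell (the signs of the $p_k$ are locally constant off $Z(p)$), and therefore inherits the bound $\abs{Q\cap\Omega_i}\leq\abs{Q}/d^n$. Second, the closing remark about tuning constants does not quite yield the statement as written: if you run the construction with $d$ replaced by $d/c_n$ to force $\deg p\leq d$, the cell bound degrades to $\abs{Q\cap\Omega_i}\leq c_n^n\abs{Q}/d^n$. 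In general the iterated ham-sandwich argument gives either $\deg p\leq d$ with cells of size $O_n(\abs{Q}/d^n)$, or $\deg p = O_n(d)$ with cells of size $\leq\abs{Q}/d^n$, but not both cleanly; the constant must live somewhere. This is harmless for the paper, which absorbs all such constants into $O_{n,\varepsilon}$, but your phrase ``only affects the hidden constant in applications'' is the honest resolution, not a way to recover the literal statement.
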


We will need a refinement of polynomial partitioning lemma due to Fox, Pach, Sheffer, Suk, and Zahl.
\begin{lemma}[Theorem 4.2 \cite{fox}] \label{zahl}
Let $V \subset \mathbb{R}^n$ be an irreducible algebraic variety of dimension $k$ and assume the degree of the complexification of $V$ has is $\delta$. Let $Q$ be a collection of points in $\mathbb{R}^n$. Then there exists a polynomial $p$ that does not vanish entirely on $V$ such that $deg p \leq C(k,\delta) d$ for a constant $C(k,\delta)$ that depends only on $k,\delta$ and 
\[ \mathbb{R}^n =  Z(p) \cup \Omega_1 \cup \Omega_2 \cup \ldots \cup \Omega_M ,\]
such that the boundary of each set $\Omega_i$ is in $Z(p)$, and $\abs{Q \cap \Omega_i} \leq \abs{Q} / d^n$, for all $i=1,2,\ldots,M$. 
\end{lemma}

Now we recall a result from real algebraic geometry which has been proved in a more general form by several authors (see for instance Appendix Theorem A.2 of \cite{tao-solymosi} and \cite{barone-basu}).

\begin{theorem} \label{SS}
Let $p \in \mathbb{R}[x_1,\ldots,x_n]$ be a polynomial of degree $d$, and let $V \subseteq \mathbb{R}^n$ be the real part of a $k$-dimensional complex variety. Then the semi-algebraic set $\{ x \in V : p(x) \neq 0 \}$ has at most $O_{n,k}(d^{k})$ many connected components.
\end{theorem}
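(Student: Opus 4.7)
My plan is to reduce the count in $\mathbb{R}^n$ to a count in $\mathbb{R}^k$ by a generic linear projection, then invoke the classical Milnor-Thom bound. By Noether normalization (\cref{Noether}), after a generic linear change of coordinates the projection $\pi\colon\mathbb{C}^n\to\mathbb{C}^k$ sending $(x_1,\ldots,x_n)\mapsto(x_1,\ldots,x_k)$ restricts to a finite morphism $\pi|_{V_{\mathbb{C}}}\colon V_{\mathbb{C}}\to\mathbb{C}^k$ of some degree $e$; I would treat $e$ as an $O_{n,k}(1)$ quantity, absorbing the (fixed) degree of $V_{\mathbb{C}}$ into the hidden constants as is standard in this setting. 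Throughout, I can assume $p$ does not vanish identically on any irreducible component of $V$, since such components contribute nothing to the count.

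Next I would construct a real ``discriminant set'' $\Delta\subset\mathbb{R}^k$ gathering every base point over which the fibers of $\pi$ misbehave: (i) the image under $\pi$ of the singular locus of $V_{\mathbb{C}}$, (ii) the critical values of $\pi|_{V_{\mathrm{sm}}}$, and (iii) the image $\pi(V\cap Z(p))$. Using the elimination operator $\Elim$ of \cref{sec:prelim-resultant}, each of these sets is contained in a real algebraic set cut out by polynomials of degree $O_{n,k}(d)$: piece (iii) comes from eliminating $x_{k+1},\ldots,x_n$ from the combined system of $p$ and the defining equations of $V_{\mathbb{C}}$ and so contributes degree linear in $d$, while (i) and (ii) depend only on $V_{\mathbb{C}}$ and contribute constants absorbed into $O_{n,k}$. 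Applying the classical real Milnor-Thom bound in $\mathbb{R}^k$ then yields $O_{n,k}(d^k)$ connected components for $\mathbb{R}^k\setminus\Delta$. For the lifting step, I would observe that over any connected component $U$ of $\mathbb{R}^k\setminus\Delta$, the restriction $(V_{\mathrm{sm}}\cap\pi^{-1}(U))\setminus Z(p)\to U$ is a topological covering of degree at most $e$, and the sign of $p$ is locally constant on each sheet because $p$ does not vanish on any fiber above $U$. Hence each such $U$ lifts to at most $2e=O_{n,k}(1)$ connected components of $\{x\in V : p(x)\neq 0\}$ that meet $V_{\mathrm{sm}}$, giving a total of $O_{n,k}(d^k)$ from the smooth stratum.

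The main obstacle I foresee is the rigorous treatment of singularities. A priori, distinct sheets of the covering might be joined through $V_{\mathrm{sing}}$, and entire connected components of $\{x\in V : p(x)\neq 0\}$ could lie inside $V_{\mathrm{sing}}$. I would handle both issues by induction on $k$: since $V_{\mathrm{sing}}$ has complex dimension strictly less than $k$, the inductive hypothesis applied to each of its irreducible components contributes at most $O_{n,k}(d^{k-1})$ further components, absorbed by the leading term. A second subtlety is controlling the degrees appearing in $\Delta$: the output of $\Elim$ must be shown to be polynomial in $d$ with constants depending only on $n$ and $k$, which I would verify by invoking the affine B\'ezout bound in \cref{bezout} applied to the perturbed system underlying the generalized characteristic polynomial.
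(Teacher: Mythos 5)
The paper does not actually prove this statement: it cites it from the literature (Appendix Theorem A.2 of Solymosi--Tao and the work of Barone--Basu), so there is no in-paper argument to compare against. Your sketch is therefore an independent proof attempt. The overall strategy --- Noether normalize to a finite projection $\pi\colon V_{\CC}\to\CC^k$, build a real discriminant $\Delta\subset\RR^k$ of degree $O_{n,k}(d)$, bound the components of $\RR^k\setminus\Delta$ by Milnor--Thom, lift through the covering, and handle $V_{\mathrm{sing}}$ by induction on dimension --- is a legitimate classical route and is different in flavor from the stratification/Morse-theoretic arguments in the cited references.

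There is one genuine gap in the lifting step. You bound components of $A=\{x\in V(\RR)_{\mathrm{sm}}:p(x)\neq 0\}$ by lifting each cell $U\subset\RR^k\setminus\Delta$, but this only accounts for components of $A$ that intersect $\pi^{-1}(\RR^k\setminus\Delta)$. You must also rule out components of $A$ whose image under $\pi$ lies entirely inside $\Delta$. This does not follow from the covering statement; it requires the separate observation that $\pi^{-1}(\Delta_{\CC})\cap V_{\CC}$ is a proper Zariski-closed subset of (each irreducible component of) $V_{\CC}$, and that the real points of a proper complex subvariety are nowhere dense in $V(\RR)_{\mathrm{sm}}$ --- an identity-theorem argument using a local holomorphic parametrization compatible with conjugation. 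Since every component of $A$ is open in a $k$-dimensional real manifold, it must then contain a point over $\RR^k\setminus\Delta$. Without this step the count is not complete. A smaller issue: you invoke \cref{bezout} to control the degree of the polynomials cutting out $\Delta$, but that result bounds the \emph{number of isolated points} of an overdetermined system, not the degree of an eliminant. What you actually need is the fact that a linear projection of a complex variety of degree $D$ has degree at most $D$, plus the Bézout bound $\deg(V_{\CC}\cap Z(p))\leq\deg(V_{\CC})\cdot d$; that gives a single polynomial of degree $O_{n,k}(d)$ vanishing on $\Delta$, after which Milnor--Thom applies cleanly. With these repairs the argument appears sound, under your (correct and necessary) reading that the implied constant may depend on $\deg V_{\CC}$.
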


\section{Symbolic Algorithm}
\label{sec:algo}

To simplify the presentation we modify the notation for the variables.
Let $\x_i = (x_{i,1}, \dots, x_{i,n})$ for $i \in [m+1]$. That is we assume
$\lambda=(n,n,\ldots,n)$ is an $m+1$-partition of $n(m+1)$.  
In this section we consider polynomials with rational coefficients
because our algorithms rely on the irreducibility of polynomials over the rationals.  If we allow complex coefficients, then we have to rely on polynomials that are irreducible over the complex numbers, which is a stronger condition (known as absolute irreducibility).

To summarize, we are interested in an algorithm that solves the
following problem.
\begin{problem} \label{algorithm}
  
  Consider a polynomial $F \in \QQ[\x_1, \dots, \x_{m+1}]$ of degree
  $d$.  Are there polynomials $G_i \in \QQ[\x_i]$ of degree $\delta \geq 1$
  irreducible over the rationals and $H_i \in \QQ[\x_1, \dots, \x_{m+1}]$ with $\deg( H_i) \leq d$, where $i \in [m+1]$,
  such that we can write $F$ as
  \begin{equation}
    F(\x_1, \dots, \x_{m+1}) = \sum_{i=1}^{m+1} G_i(\x_i) \, H_i(\x_1, \dots, \x_{m+1}) .
    \label{eq:F-decomp}
  \end{equation}
  Equivalently, are there hypersurfaces $\cV_i \subset \CC^n$ such that
  \[
    \cV_1 \times \cV_2 \times \cdots \times \cV_{m+1}
    \subset \VV(F) \subset \CC^{(m+1)n},
  \]
  where $\cV_i = \VV(G_i) \subset \CC^{n}$ are the zero sets of the polynomials $G_i$, $i \in [m+1]$.
\end{problem}

The equivalence of the algebraic and geometric formulation in Problem \ref{algorithm} follows by the following proposition. The proof is similar to the proof of \cite[Theorem 2.2]{dezeeuw} in which the authors prove the case $n=4,m=2$. 
\begin{proposition}
\label{prop:H-dg-bound}
  Let $G_i\in\CC[\x_i], i=1,2,\dots,m$ be irreducible polynomials each having degree at least $1$. Then if \[
  Z(G_1)\times Z(G_2)\times\dots\times Z(G_m) \subseteq Z(F) ,
  \] then there exist polynomials $H_1,H_2,\dots, H_m$ of degree at most $\deg(F)$, such that \[
  F = G_1 H_1+G_2 H_2+\dots+G_m H_m.
  \]
\end{proposition}
\begin{proof}
We proceed by induction on $m$. For the base case $m=1$, we note that $Z(F)\supset Z(G_1)$ implies $F\in \CC[\x_1]\cdot G_1$ and hence $F=G_1 H_1$ since $G_1$ is irreducible. The degree bound follows by $\deg F=\deg G_1+\deg H_1$. 

For $m>1$, we fix a monomial ordering that prioritizes the variables $\x_i$ over $\x_j$, for $i<j$, and respects the degree of monomials; that is, if $x^\alpha<x^\beta$ then $|\alpha|\leq |\beta|$. Using polynomial divison, we write \[
F = G_1 H_1+R
\] where no monomial of $R$ is divisible by the leading term of $G_1$. Note that since the monomial ordering respects the degree, we have $\deg R\leq \deg F$ and $\deg H_1\leq \deg F$.

For a point $p$ such that  
\[ p\in Z(G_2)\times\dots\times Z(G_m), \] 
$F(\x_1,p)$ vanishes on $Z(G_1)$, by the assumption on $F$. Hence, 
\[ R(\x_1,p) = F(\x_1,p)-G_1(\x_1)H_1(\x_1,p) \]
which implies that $R(\x_1,p)$ also vanishes on $Z(G_1)$ and it is divisible by $G_1$. On the other hand, each monomial of $R(\x_1,p)$ divides a monomial of $R$ and by the assumption that no monomial of $R$ is divisible by $LT(G_1)$, this is only possible when $R(\x_1,p)=0$. Write 
\[ R = \sum_{\alpha} R_\alpha \x_1^\alpha, \] so we have
\[   Z(G_2)\times\dots\times Z(G_m) \subseteq Z(R_\alpha). \] 
By the induction hypothesis, $R_{\alpha}$ can be written as follows 
\[ R_\alpha = G_2 H_{\alpha,2}+\dots+G_m H_{\alpha,m} \] 
where $\deg H_{\alpha,i}\leq \deg R_\alpha\leq \deg R-|\alpha|\leq \deg F-|\alpha|$. Then we have \[
F = G_1 H_1 + G_2 (\sum_{\alpha} H_{\alpha,2} \x_1^\alpha)+\dots+G_m (\sum_{\alpha} H_{\alpha,m}\x_1^\alpha).
\] Setting $H_i=\sum_\alpha H_{\alpha,i}\x_1^\alpha$, the result follows.
\end{proof}

\begin{algorithm2e}[t]
  \SetFuncSty{textsc}
  \SetKw{RET}{{\sc return}}
 \KwIn{
    $F(\x_1, \dots, \x_m, \x_{m+1})$
  }
  
  \KwOut{
    \texttt{TRUE}
    if we can write $F$ as 
    $F = \sum_{i=1}^{m+1} G_i(\x_i) \, H_i(\x_1, \dots, \x_m, \x_{m+1})$, 
    \texttt{FALSE} otherwise. }
    
  \BlankLine
  
  $\cG_1, \dots, \cG_m \leftarrow
  \FuncSty{recover\_m}(F)$ \;

  $\cG_{m+1} \leftarrow
  \FuncSty{recover\_last}(F)$ \;
  
  \tcc{{\small Perform a square-free decomposition and factorization to each $\cG_k$.}}
  \For{$1 \leq k \leq m+1$}{
      $\cG_k  = G_{k,1}^{\delta_{k_1}} \cdots G_{x,s_k}^{\delta_{s_k}} $ \;
  }

  \BlankLine
  
  \For{$i_1 \in [s_1], \dots, i_m \in [s_m], i_{m+1} \in [s_{m+1}]$}{
     Check (using linear algebra) if there are $H_1, \dots, H_{m+1}$ (of degree $\leq d$)
    such that
    \[
      F = G_{i_1}\, H_1 + \dots + G_{i_m} \, H_m + G_{i_{m+1}} \, H_{m+1}
    \]
    If this is the case, then \RET \texttt{TRUE}
  }
  \RET \texttt{FALSE}  
  \caption{\FuncSty{is\_decomposable}}
  \label{alg:is_decomposable}
\end{algorithm2e}

The algorithm \textsc{is\_decomposable} (Alg.~\ref{alg:is_decomposable}) provides a solution to the problem \ref{algorithm}.
It depends on two sub-algorithms. The first one \textsc{recover\_m} (Alg.~\ref{alg:recover_m}) recovers candidates for $G_1, \dots, G_m$
and the second one, \textsc{recover\_last} (Alg.~\ref{alg:recover_last}) provides candidates for $G_{m+1}$.
Given the various candidates for $G_1, \dots, G_m$, the algorithm
\textsc{is\_decomposable} checks if there is a certain combination of
them that allows us to write $F$ as in \eqref{eq:F-decomp}.


\begin{algorithm2e}[ht]
  \SetFuncSty{textsc}
  \SetKw{RET}{{\sc return}}
  \KwIn{
    $F(\x_1, \dots, \x_m, \x_{m+1})$
  }
  
  \KwOut{
    $\cG_{\nu} \in \CC[\x_{\nu}]$ with $\nu \in m$
    such that if
    we can write $F$ as 
    $F = \sum_{i=1}^{m+1} G_i(\x_i) \, H_i(\x_1, \dots, \x_m, \x_{m+1})$,
    then $G_{\nu}(\x_{\nu})$ is a factor of $\cG_{\nu}(\x_{\nu})$. }
    
  \BlankLine

  \tcc{\small Introduce new variables
    $\x_1^{(1)}, \dots, \x_m^{(1)}, \dots \x_1^{(mn)}, \dots, \x_m^{(mn)}$}

  $\MM \leftarrow 
  \left[
    \begin{array}{llllll}
      \x_1^{(1)} & \x_2^{(1)} & \cdots & \x_m^{(1)} \\
      \vdots  & \vdots & & \vdots \\
      \x_1^{(mn)} & \x_2^{(mn)} & \cdots & \x_m^{(mn)} \\
    \end{array}
  \right] 
  $ 
  
  $M \leftarrow 1 - t \, \det(\MM) $ \;
  
  \BlankLine

  \For{$1 \leq j \leq mn$} {
    $F_j \leftarrow F( \x_1^{(j)}, \dots, \x_m^{(j)}, \x_{m+1})$ ;
  }

  \For{$1 \leq \nu \leq m$ }{
    \For{$1 \leq k \leq (m n)^2 - m n +1$} {
      $L_k(\x_{m+1}) \leftarrow \ell_{k,0} + \ell_{k,1}x_{m+1, 1} + \cdots + \ell_{k,n}x_{m+1, n}$ ;
     %
      %
      $R_k \leftarrow
      \Elim(F_1, \dots, F_{mn}, M, L_k \,:\, \{\x_{j}^{(1)} \}_{1 \leq j \leq m, j\not=\nu}, \x_{m+1}, t)$ \;
    }
    $\cG_{\nu} \leftarrow \Elim(R_1, \dots, R_{(mn)^2 - mn + 1} \,:\,
    \x_1^{(2)}, \dots, \x_{m}^{(2)}, \dots, \x_{1}^{(m n)}, \dots, \x_{m}^{(m n)})
    \in \QQ[\x_{\nu}^{(1)}]$ 
    Rename the variables so that $\cG_{\nu} \in \QQ[\x_{\nu}]$ \;
  }
      
  \RET $\cG_1, \dots, \cG_m$
  
  \caption{\FuncSty{recover\_m}}
  \label{alg:recover_m}
\end{algorithm2e}

\begin{algorithm2e}[t]
  \SetFuncSty{textsc}
  \SetKw{RET}{{\sc return}}
  \KwIn{
    $F(\x_1, \dots, \x_m, \x_{m+1})$
  }
  
  \KwOut{
    $\cG_{m+1} \in \CC[\x_{m+1}]$ 
    such that if
    we can write $F$ as 
    $F = \sum_{i=1}^{m+1} G_i(\x_i) \, H_i(\x_1, \dots, \x_m, \x_{m+1})$,
    then $G_{m+1}(\x_{m+1})$ is a factor of $\cG_{m+1}(\x_{m+1})$. }
    
  \BlankLine

  \tcc{\small Introduce new variables
    $\x_2^{(1)}, \dots, \x_{m+1}^{(1)}, \dots, \x_2^{(mn)}, \dots, \x_{m+1}^{(mn)}$}

  $\MM \leftarrow 
  \left[
    \begin{array}{llllll}
      \x_2^{(1)} & \x_3^{(1)} & \cdots & \x_{m+1}^{(1)} \\
      \vdots  & \vdots & & \vdots \\
      \x_2^{(mn)} & \x_3^{(mn)} & \cdots & \x_{m+1}^{(mn)} \\
    \end{array}
  \right] 
  $ 
  
  $M \leftarrow 1 - t \, \det(\MM) $ \;
  
  \BlankLine

  \For{$1 \leq j \leq mn$} {
    $F_j \leftarrow F( \x_1, \x_2^{(j)}, \dots, \x_{m+1}^{(j)})$ ;
  }

  \For{$1 \leq k \leq (m n)^2 - m n +1$} {
    $L_k(\x_{1}) \leftarrow \ell_{k,0} + \ell_{k,1}x_{1, 1} + \cdots + \ell_{k,n}x_{1, n}\ ;$ 
    %
    %
    $R_k \leftarrow
    \Elim(F_1, \dots, F_{mn}, M, L_k \,:\, \x_1, \x_2^{(1)}, \dots, \x_m^{(1)}, t)$ \;
  }
  
  $\cG_{m+1} \leftarrow \Elim(R_1, \dots, R_{(mn)^2 - mn + 1} \,:\,
  \x_2^{(2)}, \dots, x_{m+1}^{(2)}, \dots, \x_2^{(m n)}, \dots, \x_{m+1}^{(m n)}) \in \QQ[\x_{m+1}^{(1)}]$ \;

    Rename the variables so that $\cG_{m+1} \in \QQ[\x_{m+1}]$ \;

  \RET $\cG_{m+1}$
  
  \caption{\FuncSty{recover\_last}}
  \label{alg:recover_last}
\end{algorithm2e}

Let $(\bm{\alpha}_1^{(j)}, \dots, \bm{\alpha}_m^{(j)})  \in 
\cV_1 \times \cV_2 \times \cdots \cV_m  \subset (\CC^n)^{m}$
be an $m$-tuple of (complex) points.
Also let $\mathcal{L} = \{(\bm{\alpha}_1^{(j)}, \dots, \bm{\alpha}_m^{(j)}) \}_{j \in [N]}$
be a set containing $N$ such $m$-tuples, for some positive $N$.
Now consider the restriction of $F$ at these $N$ tuples; 
this results $N$ polynomials in $\x_{m+1}$,
that is
$F_1= F(\bm{\alpha}_1^{(1)}, \dots,  \bm{\alpha}_m^{(1)}, \x_{m+1}),
\dots, F_N = F(\bm{\alpha}_1^{(N)}, \dots, \bm{\alpha}_m^{(N)}, \x_{m+1})$.
and the system  $(\Sigma)$ consisting of the  polynomials
\[
  (\Sigma) : \, \{ F(\bm{\alpha}_1^{(1)}, \dots, \bm{\alpha}_m^{(1)}, \x_{m+1}),
  \dots, F(\bm{\alpha}_1^{(N)}, \dots, \bm{\alpha}_m^{(N)}, \x_{m+1}),
  L(\x_{m+1})  \},
\]
where
$L(\x_{m+1}) = \ell_{0} + \ell_1x_{m+1, 1} + \cdots + \ell_nx_{m+1,n}$
is a generic linear polynomial.
By generic we mean that the coefficients of $L$
belong to an (Zariski) open subset so that $L$ does not intersect the variety defined by the polynomials $F_1, \dots, F_N$ in any special way.
We can guarantee this by picking them uniformly at random from a sufficiently large finite subset of $\ZZ$.
Thus, $(\Sigma)$ consists of
$N+1$ polynomials in $n$ variables, i.e.,
$\x_{m+1} = (x_{m+1, 1}, \dots, x_{m+1,n})$.
Since $(\bm{\alpha}_1^{(j)}, \dots, \bm{\alpha}_m^{(j)})  \in 
\cV_1 \times \cV_2 \times \cdots \cV_m  \subset (\CC^n)^{m}$, and $\cV_{m+1}$ is positive dimensional as it is a hypersurface, we conclude that  $(\Sigma)$ is not empty, that is
$\VV(\Sigma) \not= \emptyset$. Our goal is to search for suitable sets of ($m$-tuples of) points
$\{(\bm{\alpha}_1^{(j)}, \dots, \bm{\alpha}_m^{(j)}) \}$ that have this property. These sets will be the candidates for the sets $\cV_1, \dots, \cV_m$.

We present in detail how to extract candidates for the polynomial $G_1$, or
equivalently for the set $\cV_1$. The algorithm \textsc{recover\_m}
(Alg.~\ref{alg:recover_m}) does this, and in addition it computes candidates for
the polynomials $G_2, \dots, G_m$. This is the semantics of the loop over the
variable $\nu$, where at each iteration we compute a polynomial
$\mathcal{G}_{\nu}$ that contains the corresponding $G_{\nu}$ as a factor.

Let $\x_i^{(j)}$ be $n$-tuples of new variables, where $i \in [n]$ and
$j \in [mn]$. We arrange the new variables in a square $mn \times mn$
matrix $\MM$ as follows:
\begin{equation}
  \label{eq:Xvar-mat}
  \MM = 
  \left[
  \begin{array}{llllll}
    \x_1^{(1)} & \x_2^{(1)} & \cdots & \x_m^{(1)} \\
    \vdots  & \vdots & & \vdots \\
    \x_1^{(mn)} & \x_2^{(mn)} & \cdots & \x_m^{(mn)} \\
  \end{array}
  \right] .
\end{equation}

We consider the polynomial $M(\x_j^i, t) = 1 - t\, \det(\MM)$.  It
serves the following purpose: If for a given set of points
$\x_1^{(1)}, \dots, \x_m^{(1)}, \dots, \x_1^{(mn)}, \dots, \x_m^{(mn)}$
there exists a $t \in \CC$ such that
\[
  (\x_1^{(1)}, \dots, \x_m^{(1)}, \dots, \x_1^{(mn)}, \dots, \x_m^{(mn)}, t)
  \in \VV(M),
\]
then the rows of the matrix $\MM$ are linearly independent. This will
prevent us from constructing the trivial zero set by repeating twice
or more the same polynomial.

We need to introduce $(m n)^2 - m n + 1$ generic linear polynomials in
the variables $\x_{m+1}$, say $L_k(\x_{m+1})$, for
$1 \leq k \leq (m n)^2 - m n + 1$.  Consider the linear polynomial
$L_1$ and the polynomial system
\begin{equation}
  \label{eq:S1}
  (\Sigma_1) 
  \left\{
      \begin{array}{lcllll}
        F_1 =F(\x_1^{(1)}, \dots, \x_m^{(1)}, \x_{m+1})  = 0 \\
        \vdots  \\
        F_{m n} = (\x_1^{(m n)}, \dots, \x_m^{(m n)}, \x_{m+1})  = 0 \\
        M(\x_j^i, t) = 1 - t\, \det(\MM) = 0\\
        L_1(\x_{m+1}) =  0 .
  \end{array}
  \right . 
\end{equation}
The system $(\Sigma_1)$ consists of  $m n + 2$ polynomial equations in $(m n)^2 + n + 1$ variables.
Hence, using a resultant operator we can eliminate $m n + 1$ of the variables.
We choose to eliminate $\x_2^{(1)}, \dots, \x_m^{(1)}, \x_{m+1}, t$.
We denote this elimination by 
\[
  R_1 \leftarrow
  \Elim(F_1, \dots, F_{mn}, M, L_1 \,:\, \{\x_2^{(1)}, \dots, \x_m^{(1)}, \x_{m+1}, t\}) ,
\]
where $R_1 \in \QQ[\x_1^{(1)}, \x_1^{(2)}, \dots, \x_{m}^{(2)}, \dots, x_{1}^{(m n)}, \dots, \x_{m}^{(m n)}]$
has $(m n)^2 - m n + n$ variables.

We repeat this elimination process $(m n)^2 - m n +1$ times, each time
using a different linear polynomial, say $L_k(\x_{m+1})$. Thus, after
elimination, we obtain $(m n)^2 - m n +1$ polynomials $R_k$
that live in
$\QQ[\x_1^{(1)}, \x_1^{(2)}, \dots, \x_{m}^{(2)}, \dots, x_{1}^{(mn)}, \dots, \x_{m}^{(m n)}]$.

Next, we consider the polynomial system consisting of the
$(mn)^2 - m n + 1$ resultant polynomials,
$\{ R_1, \dots, R_{(mn)^2 - m n + 1} \}$, in $(m n)^2 - m n + n$
variables.  We can eliminate $(m n)^2 - m n$ of them.  We choose to
eliminate
$\x_1^{(2)}, \dots, \x_{m}^{(2)}, \dots, \x_{1}^{(m n)}, \dots,
\x_{m}^{(m n)}$ and so
\[
  \cG_1 \leftarrow \Elim(R_1, \dots, R_{(mn)^2 - mn + 1} \,:\,
  \x_1^{(2)}, \dots, \x_{m}^{(2)}, \dots, \x_{1}^{(m n)}, \dots,
  \x_{m}^{(m n)}) \in \QQ[\x_{1}^{(1)}],
\]
where $\cG_1 \in \QQ[\x_1^{(1)}]$.
We can rename the variables of $\cG_1$ and  assume that $\cG_1 \in \QQ[\x_1]$.

By construction the projection $\cV_1$ is included in $\VV(\cG_1)$.
Therefore, if we can write $F$ as in \eqref{eq:F-decomp}, then we can
recover $G_1$ as a factor of $\cG_1$, because we have assumed that it
is irreducible over the rationals.  Thus, we should perform a
factorization of $\cG_1$ over the rationals.

To recover candidates for $G_2$, when we compute the resultants $R_k$, we choose
to eliminate the variables $\x_1^{(1)}, \x_{3}^{(1)}, \dots, \x_{m}^{(1)}, \x_{m+1}, t$.
Subsequently, the elimination procedure gives us $\cG_2$, which is a polynomial in $\QQ[\x_2^{(1)}]$.
We work similarly for $\cG_3, \dots, \cG_{m}$.
To obtain $\cG_{m+1}$ we should choose a different generic linear
polynomial, but the algorithm is almost the same. We call it
\textsc{recover\_last} (Alg.~\ref{alg:recover_last}).  In this case we
introduce linear forms $L_k \in \QQ[\x_1]$ and we choose the variables
that we eliminate in order to end up with a polynomial
$\cG_{m+1} \in \QQ[\x_{m+1}]$.

Finally, \textsc{is\_decomposable} (Alg.~\ref{alg:is_decomposable})
combines the various candidates to test if it is possible to write $F$
as in \eqref{eq:F-decomp}.  This step relies purely on linear algebra.
Recall, that we know the degrees of the polynomials $H_i$.  If we are
given $G_1, \dots, G_{m+1}$, then we consider the coefficients of
$H_i$ as unknowns and we construct the linear system formed by identifying 
the coefficients of $F$ with those of the polynomial
$\sum_{i=1}^{m+1}G_i \, H_i$.  If the linear system has a solution,
then we have obtained a decomposition.

\begin{remark}
   We only need the degree bound on $H_i$ coming from Prop.~\ref{prop:H-dg-bound} because we rely on linear algebra for computing the decomposition of $F$. Alternatively, we can exploit the fact that the polynomials $G_i$ form a Gr\"obner basis and compute the $H_i$'s, following the proof of the proposition \ref{prop:H-dg-bound}, using successive polynomial divisions. In this case the degree bound is not necessary.
\end{remark}

 


\section{The proofs}
\label{sec:proofs}

\subsection{Proof of  multivariate Combinatorial Nullstellensatz}
\label{sec:proof-CN}

\begin{proof}[Proof of Theorem~\ref{combnull}]
We assume the existence of a monomial $x^{\alpha}$ with non-zero
coefficient in the expansion of $p$ with $\deg_i(x^{\alpha})=d_i$ for
all $i$. We use the notation $\bar{x}_1=(x_1,x_2,\ldots,x_{\lambda_1})$, and
similarly for $\bar{x}_i$, for $1 \leq i \leq m$. We use $\bar{x}_1^{\alpha_1}$ to
denote the monomial
$x_1^{\alpha_1} x_2^{\alpha_2} \ldots
x_{\lambda_1}^{\alpha_{\lambda_1}}$ and
$\bar{x}_2^{\alpha_2}, \ldots, \bar{x}_{m}^{\alpha_m}$ are used in the
same fashion. We will now construct some auxiliary functions for our
proof: We claim the existence of functions $f_i : S_i \mapsto \mathbb{F}$
for $i=\{1,2,\ldots,m\}$ with the following properties:
\begin{enumerate}
\item For all $\beta \in \mathbb{Z}_{\geq 0}^{\lambda_i}$ with $\deg_i(\bar{x}_i^\beta) \leq d_i$ and $\beta \neq \alpha_i$, we have
\[ \sum_{y \in S_i} f_i(y) y^{\beta} = 0 . \]
\item For the case of $\bar{x}_i^{\alpha_i}$, that is when $\beta = \alpha_i$, we have 
\[ \sum_{y \in S_i} f_i(y) y^{\alpha_i} = 1 . \]
\end{enumerate}
We prove the existence of $f_1$; the same proof works for all $f_i$.
The technique has similarities with the construction of multivariate Lagrange interpolation polynomials.
We construct a $\binom{\lambda_1 + d_1}{d_1} \times \abs{S_1}$  matrix $A$.
Each row of $A$ corresponds to a $\beta \in \mathbb{Z}_{\geq 0}^{\lambda_1}$ with $\deg_1(\bar{x}_1^{\beta}) \leq d_1$ and is of the form  
\[ ( y_1^{\beta} , y_2^{\beta}, \ldots, y_{\abs{S_1}}^{\beta}),  \]
where $y_j$ are the distinct elements of $S_1$.
The matrix $A$ is a multivariate Vandermonde matrix.
There are $\binom{\lambda_1 + d_1}{d_1}$ rows and $\abs{S_1}$ columns.
The assumption $\deg(S_1) > d_1 $ implies that
$\abs{S_1} \geq \binom{\lambda_1+d_1}{d_1}$. We will show that $A$ is full rank,
i.e., $\mathrm{rank}(A)=\binom{\lambda_1+d_1}{d_1}$. Assume that rows of $A$ are
linearly dependent. That is, we assume there exists a
$\binom{\lambda_1+d_1}{d_1} \times 1$ vector $ c = ( c_{\beta} )$ with
$c^{\top} A = 0$. Then for any $y \in S_1$ we have

\[\sum_{\deg(\beta)\leq d_1} c_{\beta} y^{\beta} = 0 .\]
If we define a polynomial $g \in \mathbb{F}[\bar{x}_1]$ by setting $g(\bar{x}_1)=\sum_{\deg(\beta)\leq d_1} c_{\beta} \bar{x}_1^{\beta}$, then $g$, which is of degree $d_1$, vanishes on the entire set $S_1$. This contradicts with the assumption that $\deg(S_1) > d_1$,
that imposes that every polynomial vanishing on $S_1$ has to have degree greater than $d_1$. Thus, the rows of $A$ are linearly independent. Now we can find the desired $f_1$ by solving the linear system 
\[ A [f_1(y_1) , f_1(y_2) , \ldots , f_1(y_{\abs{S_1}}) ]^{\top}= [0,0,\ldots,0,1]^{\top} ,\]
where the only non-zero entry in the right hand side is at the coordinate corresponding to $\alpha_1$.

Using the $f_i$'s we complete the proof as follows:
Assume that $p \in \mathbb{F}[x_1,\ldots,x_n]$ satisfies the
hypothesis of the theorem and also vanishes on the entire set $S$. Let
$p = \sum_{\beta} p_{\beta} x^{\beta}$, where the sum is over all monomials
$x^{\beta}$ with $\deg_i(x^{\beta}) \leq d_i$ for all $i \in [m]$. Now consider the
following sum:
\[ \sum_{ t=(t_1,t_2, \ldots, t_m) \in S} \left( \prod_{i=1}^m f_i(t_i) \right) p(t) .\]
Due to the assumption that $p(t)=0$ for all $t \in S$, this sum is $0$. On the other hand, we have the following way of rewriting the sum
\[ \sum_{ t=(t_1,t_2, \ldots, t_m) \in S} \left( \prod_{i=1}^m f_i(t_i) \right) p(t) = \sum_{\beta} p_{\beta} \left( \sum_{(t_1,t_2, \ldots, t_m) \in S} \prod_{i=1}^m f_i(t_i) t_i^{\beta_i} \right) .\]
 Note the following identity:
\[  \sum_{(t_1,t_2, \ldots, t_m) \in S} \prod_{i=1}^m f_i(t_i) t_i^{\beta_i}  = \prod_{i=1}^m  \sum_{t_i \in S_i} f_i(t_i) t_i^{\beta_i}  .\]
By the established properties of $f_i$, we have 
\[ \prod_{i=1}^m \left( \sum_{t_i \in S_i} f_i(t_i) t_i^{\beta_i} \right)  = 0 ,\]
\noindent unless $\beta=\alpha$. In conclusion, we have
\[ \sum_{ t=(t_1,t_2, \ldots, t_m) \in S} \left( \prod_{i=1}^m f_i(t_i) \right) p(t) = p_{\alpha} \left( \sum_{(t_1,t_2, \ldots, t_m) \in S} \prod_{i=1}^m f_i(t_i) t_i^{\alpha_i} \right) = p_{\alpha} .\]
By the theorem's hypothesis, $p_{\alpha} \neq 0$, which implies that there exists a $t \in S$ with $p(t) \neq 0$.
\end{proof}

\subsection{Proof of multivariate Schwartz-Zippel lemma}
\label{sec:proof-mSZ}
The proof involves detailed inequalities. For the sake of clarity in presentation, we only present the proof in the case $m=2$. For $m>2$, one simply repeats the proof we present below.
\subsubsection{Preparation for the proof}

\begin{lemma}
  \label{trivial}
  Let $\mathbb{F}$ be a characteristic zero field, $L$ be a collection of varieties in $\mathbb{F}^n$, and let $P$ be a collection of points in  $\mathbb{F}^n$. We denote the set of incidences between $L$ and $P$  with $\mathcal{I}(L,P)$. Suppose that for any $n$-tuple of distinct
  varieties $ V_1, V_2, \ldots, V_n$ in $L$ the B\'ezout bound holds:
  $ \abs{ \cap_{i=1}^n V_i } \leq d^n$. Then, we have
  \[ \abs{\mathcal{I}(L,P)} \leq 2 d \abs{L} \abs{P}^{1-\frac{1}{n}} + 2 n \abs{P} . \]
\end{lemma}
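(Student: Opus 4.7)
The plan is to apply a double counting / K\H{o}v\'ari--S\'os--Tur\'an-style argument, counting pairs consisting of a point in $P$ and an $n$-tuple of polynomials in $L$ that pass through it, and then to extract the incidence bound via a convexity step.

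More precisely, for each point $p \in P$ let $r(p) = \#\{f \in L : f(p)=0\}$ denote its multiplicity, so that $|\mathcal{I}(L,P)| = \sum_{p \in P} r(p)$. The number of triples $(p, \{f_1,\ldots,f_n\})$ with $p$ a common zero of $n$ distinct polynomials from $L$ equals $\sum_{p \in P} \binom{r(p)}{n}$ when counted from the point side. From the polynomial side, the B\'ezout-type hypothesis bounds the common zeros of any fixed $n$-tuple by $d^n$, so the same count is at most $d^n \binom{|L|}{n} \le \frac{(d|L|)^n}{n!}$. This yields the fundamental inequality
\[
\sum_{p \in P} \binom{r(p)}{n} \;\le\; \frac{(d|L|)^n}{n!}.
\]

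To pass from this inequality on binomial moments to the linear sum $\sum r(p)$, I would partition $P = P_1 \sqcup P_2$ according to whether $r(p) \ge 2n$ or $r(p) < 2n$. The contribution of $P_2$ to the incidence count is trivially bounded by $2n|P|$. For $p \in P_1$, the elementary estimate $\binom{r}{n} \ge r^n / (2^n n!)$ (valid for $r \ge 2n$, since each factor $r-i$ exceeds $r/2$) combined with the displayed inequality gives
\[
\sum_{p \in P_1} r(p)^n \;\le\; (2d|L|)^n.
\]
An application of H\"older's inequality (or the power-mean inequality) then produces
\[
\sum_{p \in P_1} r(p) \;\le\; |P_1|^{1-1/n}\Bigl(\sum_{p \in P_1} r(p)^n\Bigr)^{1/n} \;\le\; 2d|L|\,|P|^{1-1/n}.
\]
Adding the two contributions yields the claimed bound $|\mathcal{I}(L,P)| \le 2d|L||P|^{1-1/n} + 2n|P|$.

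No step here is expected to be a genuine obstacle; the only place requiring minor care is verifying the constants in the inequality $\binom{r}{n} \ge r^n/(2^n n!)$ for the threshold $r \ge 2n$, which controls both the factor of $2$ in front of $d|L||P|^{1-1/n}$ and the factor $2n$ in front of $|P|$. The B\'ezout-type hypothesis is used exactly once, to pass from the ``polynomial side'' count to $d^n\binom{|L|}{n}$, so the proof is otherwise purely combinatorial.
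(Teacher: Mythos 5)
Your proof is correct, and it takes a somewhat different route from the paper's. The paper works with the vector $I = (|I_x|)_{x\in P}$ and the quantity $\|I\|_n^n = \sum_{x\in P}|I_x|^n$, counting \emph{ordered} $n$-tuples of polynomials through each point; it splits these into the all-distinct tuples (bounded by B\'ezout and double counting) and the tuples with a repetition (bounded crudely by $n\|I\|_{n-1}^{n-1}$, then by H\"older as $n|P|^{1/n}\|I\|_n^{n-1}$), arriving at the self-referential inequality $\|I\|_n^n \le \binom{|L|}{n}d^n + n|P|^{1/n}\|I\|_n^{n-1}$; this is then resolved by a global dichotomy on the size of $\|I\|_n$, followed by a final H\"older step to recover $\|I\|_1$. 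You instead count \emph{unordered} $n$-element subsets directly via $\sum_p \binom{r(p)}{n} \le \binom{|L|}{n}d^n$, bypass the repeated-tuple correction term entirely, and replace the global case split on $\|I\|_n$ by a pointwise threshold $r(p)\ge 2n$, with $\binom{r}{n}\ge r^n/(2^n n!)$ providing the passage from binomial moments to $r(p)^n$. Both arguments are of K\H{o}v\'ari--S\'os--Tur\'an type and produce the same bound, but yours is tighter in bookkeeping: counting unordered subsets sidesteps the factor-$n!$ accounting that the ordered-tuple version has to manage implicitly, and the per-point threshold makes the origin of the constants $2$ and $2n$ transparent. The only fact requiring verification, $\binom{r}{n} \ge r^n/(2^n n!)$ for $r\ge 2n$, is exactly as you state: each factor $r-i$ with $0\le i\le n-1$ satisfies $r-i > r/2$ since $i \le n-1 < n \le r/2$.
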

\begin{proof}
  For every $x \in P$ we define the set of incidences; $I_x := \{ V \in L : x \in V \}$. Then, we have

  \[  \abs{\mathcal{I}(L,P)} = \sum_{x \in P} \abs{I_x} \]

  \noindent We set the vector $I:= (\abs{I_x})_{x \in P}$. We have $\abs{\mathcal{I}(L,P)}= \norm{I}_1$. Now, we  consider the expansion of $\norm{I}_{n}^{n}$:

  \begin{equation} \label{ohnein}
    \norm{I}_{n}^{n} = \sum_{x \in P} \abs{I_x}^{n} = \sum_{x \in P} \abs{ \{ (V_1,V_2,\ldots,V_{n}) \in L \times \ldots \times L :  V_i \in I_x \; \text{for all} \; 1 \leq i \leq n \}  } 
  \end{equation}
  We divide (\ref{ohnein}) into two summands; the first summand consists of $n$-tuples $(V_1,\ldots,V_n)$ where all $V_i$ are distinct, and the second summand consists of $n$-tuples $V_i$ where at least one of the $V_i$ is repeated. This gives us the following:
  \begin{equation} \label{wasmachtsdu}
    \norm{I}_{n}^{n}  \leq \sum_{x \in P}  \abs{ \{ (V_1,V_2,\ldots,V_n) \in L \times \ldots \times L : V_i \neq V_j \; , \; V_i \in I_x \}  }  + n \norm{I}_{n-1}^{n-1}  
  \end{equation}
  Here we used the crude estimate $n \norm{I}_{n-1}^{n-1}$ to bound the second summand:
  \[ \sum_{x \in P} \abs{ \{ (V_1,V_2,\ldots,V_n) : V_i \in I_x , \; \text{at least one} \; V_i \; \text{is repeated} \}  }   \]
  Now we will do a double counting argument for the first summand in (\ref{wasmachtsdu}): instead of summing over $x \in P$ we sum over $n$-tuples $(V_1,V_2,\ldots,V_{n})$ where $V_i \in L$ are distinct. The B\'ezout assumption in the lemma statement gives us the following bound:
  \[ \norm{I}_{n}^{n}  \leq \binom{\abs{L}}{n} d^{n} +  n \norm{I}_{n-1}^{n-1} \]
  Note that $\norm{I}_{n-1}^{n-1}=\sum_{x \in P} \abs{I_x}^{n-1}$, so using H\"older's inequality for the pair $(\frac{n}{n-1},n)$ gives the following 
  \begin{equation} \label{nochmal}
    \norm{I}_{n}^{n}  \leq \binom{\abs{L}}{n} d^{n} + n\abs{P}^{\frac{1}{n}}  \norm{I}_{n}^{n-1} 
  \end{equation}
  \noindent Either we have $\norm{I}_{n} \leq 2 n \abs{P}^{\frac{1}{n}} $ or $\frac{1}{2} \norm{I}_{n} > n \abs{P}^{\frac{1}{n}}$. In the second case, we have 
  \begin{equation}
    \frac{1}{2} \norm{I}_{n}^{n}  \leq  \norm{I}_{n}^{n}  -  n \abs{P}^{\frac{1}{n}}  \norm{I}_{n}^{n-1} \leq  d^{n} \binom{\abs{L}}{n}
  \end{equation}
  \noindent where we used (\ref{nochmal}) for the latter inequality. Using H\"older's inequality and Stirling's estimate gives us the following:
  \[ \norm{I}_1^{n} \leq \norm{I}_{n}^n \abs{P}^{n-1} \leq  2 d^n \left( \frac{e \abs{L} }{n} \right)^n \abs{P}^{n-1} \leq 2 d^n \abs{L}^{n} \abs{P}^{n-1} \]
  \noindent In the first case, we would have $\norm{I}_1 \leq \norm{I}_{n} \abs{P}^{1-\frac{1}{n}} \leq 2 n \abs{P}$.
\end{proof}

\begin{lemma}
  \label{partition}
  Let $\mathbb{F}$ be a characteristic zero field, and let $S_1 \subset \mathbb{F}^{n_1}$ and
  $S_2 \subset \mathbb{F}^{n_2}$ be finite sets. Let $V$ be a variety in $\mathbb{F}^{n_1+n_2}$, suppose that for any two sets $U_1 , U_2$ with $\abs{U_i} > d^{n_i}$ the cartesian product $U_1 \times U_2$ is not included in $V$. Then there exist
  sets $S_{1i}$ with the following properties:
  \begin{enumerate}
  \item $S_1 = \cup_{j=1}^{t} S_{1j}$  with $t \leq d^{2n_1} $.
  \item For every  $1 \leq i \leq t$ and  any $I \subset S_{1i}$  with $\abs{I}=n_2$ we have the following: 
    \[
      \abs{ \cap_{x \in I} \{ y \in \mathbb{F}^{n_2} : (x,y) \in V \}  } \leq d^{n_2} .
    \]
  \end{enumerate}
\end{lemma}
\begin{proof}[Proof of Lemma~\ref{partition}]
  We consider all $n_2$-element subsets $I$ of $S_1$, and write down
  the corresponding varieties 
  \[ V_I:=\cap_{x \in I} \{ y \in \mathbb{F}^{n_2} : (x,y) \in V \} \]
  We discard all $V_I$ that have less then $d^{n_2}+1$ many elements, and keep track all the rest. Suppose we have a list
  $V_1,V_2,\ldots,V_M \subset \mathbb{C}^{n_2}$. For every $V_i$ in the list, 
  we define $U_i \subset S_1$ as follows:
  \[ U_i := \{ x \in S_1 : (x,y) \in V \; \text{for all} \; y \in V_i \}
    .\] Since $\abs{V_i} \geq d^{n_2}+1$, we must have
  $\abs{U_i} \leq d^{n_1}$. Assume otherwise, w.l.o.g. say
  $\abs{U_1}>d^{n_1}$ and pick a subset $W_1$ of $V_1$ with more than
  $d^{n_2}$ many elements. Then $p$ vanishes on $U_1 \times W_1$ by
  construction, and this gives a contradiction by Lemma \ref{witness}.

  The rest of the proof is as follows: We will create a poset out of
  $U_1,U_2,\ldots,U_M$, this poset will have at most $d^{n_1}$ many
  layers and we color the $U_i$ in every layer with $d^{n_1}$ many
  colors. Thus, in total, we use at most $d^{2n_1}$ colors and these
  colors will correspond to $S_{1j}$.

  We define the poset as follows: we use the partial order $\succ$
  given by the inclusion of the varieties $V_i$. For instance, if
  $V_i \cup V_j \subseteq V_k$, then we have $U_k \succ U_i$ and
  $U_k \succ U_j$. Also note that here we must have $U_k \neq U_j$ and  $U_k \neq U_i$, otherwise either $V_i$ or $V_j$ is redundant and  will be discarded from our list of varieties together with the corresponding $U_i$ (or $U_j$).

  First, we observe that the longest chain in this poset could be of
  length $d^{n_1}$: $U_k \succ U_i$ implies $\abs{U_i} > \abs{U_k}$ for $i \neq k$,   and we know $\abs{U_i} \leq d^{n_1}$ for all $U_i$. Thus we have a poset with at most $d^{n_1}$ many layers. We start with the first layer and color elements of every $U_\ell$ in this layer using $d^{n_1}$ many colors. Suppose for some $U_i$ and $U_j$ in the first layer, we   have $U_i \cap U_j \neq \emptyset$. Then the corresponding variety   $V_i \cup V_j$ (or a variety including this variety) must be in our   list of varieties, say $V_k=V_i \cup V_j$,   where we have $U_i \cap U_j \subseteq U_k$. By definition of the poset, we have $U_k \succ U_i$ and $U_k \succ U_j$, and that the set $U_k$ is in the second layer or higher in our poset. The elements in $U_i \cap U_j \subset U_k$ will be re-colored with new colors that are going to be used in the second layer of the poset. Similarly, if $U_i \cap U_j \cap  U_{\ell} \neq \emptyset$  for some $i,j,\ell$, the corresponding set $U_i \cap U_j \cap  U_{\ell}  \subset U_k$  satisfies $U_k \succ U_i \cap U_j \succ U_i$ and is at least in the third layer of the poset etc..  Hence, using $d^{n_1}$ different colors at every layer we can guarantee that for any two elements $u,v \in U_{\ell}$ for some $\ell$, the color of $u$ and the color $v$ are different. Thus, at most $d^{2n_1}$ many colors suffice. 
\end{proof}

\subsubsection{$\lambda$-irreducibility for real varieties}
\begin{definition}[Real $\lambda$-irreducible variety]
Let $\lambda=(\lambda_1,\lambda_2,\ldots,\lambda_m)$ be a vector with positive integer coordinates, let $n=\lambda_1+\ldots+\lambda_m$, and let $V \subseteq \mathbb{R}^{\lambda_1} \times \mathbb{R}^{\lambda_2} \ldots \times\mathbb{R}^{\lambda_m}$ be a real algebraic set. We say $V$ is $\lambda$-reducible if there exist positive dimensional real varieties $C_1, C_2,\ldots,C_m$ with 
\[ C_1 \times C_2 \times \ldots \times C_m \subseteq V. \]
We say $V$ is $\lambda$-irreducible otherwise.
\end{definition}

There is a translation between the complex and the real definitions as follows.
\begin{proposition}
 Let $\lambda=(\lambda_1,\lambda_2,\ldots,\lambda_m)$ be a vector with positive integer coordinates, let $n=\lambda_1+\ldots+\lambda_m$, and let $W \subset \mathbb{C}^n$ be variety, and denote the embedding of $W$ into $\mathbb{R}^{2n}$ with $V$. We set $2 \lambda$ as $2\lambda:=(2\lambda_1, 2\lambda_2,\ldots, 2\lambda_m)$. Then, $W$ is a complex $\lambda$-irreducible variety if and only if $V$ is $2\lambda$-irreducible real algebraic variety.  
\end{proposition}
\begin{proof}
$(\Rightarrow)$ Assume on the contrary that $V$ is $2\lambda$-reducible, that is, there exist real varieties $C_i\subset\RR^{2\lambda_i}$ of dimension $\geq 1$ such that \[
C_1\times C_2\times\dots\times C_m\subset V.
\] As $W$ is a complex variety, it is Zariski closed and hence \[
\overline{C_1\times C_2\times\dots\times C_m}\subset W.
\] It is standard to show that \[
\overline{C_1\times C_2\times\dots\times C_m} = \overline{C_1}\times\overline{C_2}\times\dots\times\overline{C_m}.
\] Moreover, $\dim_\CC \overline{C_i}\geq 1$; thus, $W$ is complex $\lambda$-reducible which is a contradiction.

$(\Leftarrow)$ Say \[
C_1\times C_2\times\dots \times C_m\subset W.
\] The standard embedding of $C_i$ into $\RR^{2\lambda_i}$ are real algebraic and has dimension \\
$2\dim_{\mathbb{C}} C_i$. Hence, $V$ contains the product $\prod_{i=1}^m C_i$ of real algebraic sets of dimension at least $1$, which contradicts with the assumption that $V$ is $2\lambda$-irreducible. 
\end{proof}

Moreover, one can also bound the degree of the $V$ using $\deg W$. 
\begin{lemma} \label{transfer}
Let $W \subset \mathbb{C}^n$ be a variety of degree $d$. Then, the standard embedding of $V \subset \mathbb{R}^{2n}$ is a real algebraic variety. Moreover, the complexification $\tilde{V} \subset \mathbb{C}^{2n}$ of the real algebraic variety $V \subset \mathbb{R}^{2n}$ has degree at most $O(d)^{2n}$.
\end{lemma}
\begin{proof}
The degree $d$ complex variety $W$ is the zero locus of some complex polynomials $f_i$ of degree at most $d$ (see, for example, \cite[Prop.~3.5]{EHV-primdec-92}). Each $f_i$ splits into the real and the imaginary part, say $p_i,q_i$ respectively. Setting \[
F = \sum_{i=1}^n p_i^2+q_i^2
\] we have\[
V = Z(\sum_{i=1}^n p_i^2+q_i^2)\subset\RR^{2n},
\] as a real algebraic set. Now we can use the result of \cite{roy}: The degree of the complexification of a real variety $Z(F)\subset\RR^n$ is bounded by $O(\deg F)^{2n}$. In particular, \[
\deg\tilde{V} \leq O(d)^{2n}. 
\]
\end{proof}

\subsubsection{Proof for the case $m=2$}

\begin{proposition}
  \label{SZJJ}
Let $n=n_1+n_2$ be a two partition, and let $S_1 \subset \mathbb{R}^{n_1}$, $S_2 \subset \mathbb{R}^{n_2}$ be finite sets. Let $V \subset \mathbb{R}^n$ be a $k$-dimensional irreducible real algebraic variety where the degree of the complexification of $V$ is $d$ and $d \geq 2$.  Suppose that for every $n_2$-element subset $I$ of $S_1$, and for every $n_1$-element subset $J$ of $S_2$, we have the following
\[ \abs{ \cap_{x \in I} \{ y : (x,y) \in V \}  } \leq d^{n_2} \; , \; \abs{ \cap_{y \in J} \{ x : (x,y) \in V \} } \leq d^{n_1}. \]
Then for every $\varepsilon >0$, we have
\[ \abs{V \bigcap S_1 \times S_2} \leq c_{\varepsilon} d^{n^{3k}+1} \abs{S_1}^{1-\frac{1}{n_1+1}+\varepsilon} \abs{S_2}^{1-\frac{1}{n_2+1} + \varepsilon} + 2n d^{n^3}( \abs{S_1} + \abs{S_2}) ,\]
where $c_{\varepsilon}$ is a constant that depends only on $\varepsilon$.
\end{proposition}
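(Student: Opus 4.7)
I would prove Proposition~\ref{SZJJ} by polynomial partitioning in the style of Guth--Katz, using Lemma~\ref{trivial} as the per-cell incidence estimate. Identifying $\mathbb{C}^{n_2} \cong \mathbb{R}^{2n_2}$, apply Proposition~\ref{GK} to $S_2$ with a degree parameter $D$ (to be tuned later): this produces $q \in \mathbb{R}[y_1,\dots,y_{2n_2}]$ of degree at most $D$ together with a decomposition $\mathbb{R}^{2n_2} = Z(q) \cup \Omega_1 \cup \dots \cup \Omega_M$ with $M = O(D^{2n_2})$ cells and $\abs{S_2 \cap \Omega_j} \leq \abs{S_2}/D^{2n_2}$ for each $j$. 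Split the incidence count as
\[
\abs{Z(p) \cap S_1 \times S_2} = I_{\mathrm{cells}} + I_Z,
\]
according to whether the $y$-coordinate of an incidence lies in some open cell or in $Z(q) \cap S_2$.

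For $I_{\mathrm{cells}}$, in each cell $\Omega_j$ set $P_j = S_2 \cap \Omega_j$ and $L_j = \{x \in S_1 : Z(p_x) \cap \Omega_j \neq \emptyset\}$. The B\'ezout-type hypothesis of the proposition is inherited by every subset of $S_1$, so Lemma~\ref{trivial} applied within the cell yields the local estimate $\abs{I_j} \leq 2d \abs{L_j} \abs{P_j}^{1 - 1/n_2} + 2n_2 \abs{P_j}$. To control $\sum_j \abs{L_j}$ I would observe that each complex hypersurface $Z(p_x) \subset \mathbb{C}^{n_2}$ has real dimension $2n_2 - 2$ and invoke Theorem~\ref{SS} (or the Barone--Basu refinement) to bound the number of cells that $Z(p_x)$ can meet by $O_n((dD)^{2n_2-2})$; summing the local estimates then controls $I_{\mathrm{cells}}$ as a function of $D$.

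For $I_Z$, the restricted set $S_2 \cap Z(q)$ lies on a real hypersurface of dimension $2n_2 - 1$. Here I would recurse: either iterate polynomial partitioning inside $Z(q)$ (using a subvariety version of Proposition~\ref{GK}), or project $Z(q)$ down to $\mathbb{C}^{n_2-1}$ via Noether normalization (Proposition~\ref{Noether}) and invoke induction on the ambient dimension. By symmetry, the analogous partitioning and recursion is carried out on $S_1 \subset \mathbb{R}^{2n_1}$, which balances the exponents in the final bound. Choosing $D$ so that the cellular contribution matches the on-hypersurface contribution produces the exponents $1 - 1/(n_1+1)$ and $1 - 1/(n_2+1)$; the $\varepsilon$-loss arises from the $O(n)$ recursion levels with their logarithmic optimization overhead, and the cumulative degree growth across these levels is what shows up as the $d^{n^3}$ factor in the lower-order term.

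The principal obstacle is to verify that the B\'ezout-type hypothesis of the proposition really survives each recursive descent: the restriction of $p$ to the auxiliary hypersurface $Z(q)$ need not retain the original structural properties, so the inequalities $\abs{\bigcap_{x \in I} V(p_x)} \leq d^{n_2}$ must be re-established in the reduced setting. I would handle this via Lemma~\ref{witness}: if the reduced hypothesis failed one could extract a large Cartesian product sitting inside $Z(p)$, contradicting the original assumption. Careful degree bookkeeping through the recursion (relying on the generalized characteristic polynomial machinery of Section~\ref{sec:prelim-resultant} to avoid the vanishing-resultant issue when restricting to excess components of $Z(q)$) then closes the argument.
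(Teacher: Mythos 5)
Your overall toolkit is right --- real polynomial partitioning via Proposition~\ref{GK}, the connected-components bound of Theorem~\ref{SS}, Noether normalization for the hypersurface piece, and Lemma~\ref{witness} to control the hypothesis --- but the core step of using Lemma~\ref{trivial} as the per-cell estimate does not close. Lemma~\ref{trivial} gives $\abs{\mathcal{I}(L_j,P_j)} \leq 2d\abs{L_j}\abs{P_j}^{1-1/n_2} + 2n_2\abs{P_j}$, in which $\abs{L_j}$ appears with exponent $1$. Summing over cells with $\sum_j \abs{L_j} = O\bigl(\abs{S_1}(dD)^{2n_2-2}\bigr)$ therefore leaves $\abs{S_1}$ with exponent $1$ in $I_{\mathrm{cells}}$; no amount of H\"older on the $\abs{P_j}$ factors or tuning of $D$ shaves that down to $\abs{S_1}^{1-\frac{1}{n_1+1}+\varepsilon}$. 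You cannot recover the missing power by ``by symmetry, also partition $S_1$'' without specifying how the two partitions interact --- a two-level cell decomposition is a genuinely different and more delicate argument than the one you sketch. The paper sidesteps this entirely: it runs a \emph{double induction}, on $\abs{S_1}+\abs{S_2}$ and on $n_1+n_2$, fixes the partition degree at $d^2$ (so there is nothing to tune), and uses the \emph{inductive hypothesis of Proposition~\ref{SZJJ} itself} as the per-cell estimate, which already carries the balanced exponents $\abs{\Omega_i}^{1-\frac{1}{n_1+1}+\varepsilon}\abs{L_i}^{1-\frac{1}{n_2+1}+\varepsilon}$. The induction is valid because each $\Omega_i$ holds at most $\abs{S_1}/d^{4n_1}$ points, strictly fewer than $\abs{S_1}$, with a base case $\min\{\abs{S_1},\abs{S_2}\}\leq 2nd^{n^3}$ absorbed by the additive term. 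Lemma~\ref{trivial} is invoked exactly once, at the very start, to dispose of the regime in which $\abs{S_1}$ and $\abs{S_2}$ are too imbalanced for the inductive estimate to be useful.

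Two further structural deviations are worth flagging. First, the paper partitions $S_1$ and $S_2$ \emph{separately} (with $h_1\in\RR[\bar x_1]$, $h_2\in\RR[\bar x_2]$) and splits the count into three pieces: $(S_1\setminus Z(h_1))\times S_2$, $S_1\times(S_2\setminus Z(h_2))$, and the joint hypersurface piece $(S_1\cap Z(h_1))\times(S_2\cap Z(h_2))$. Your split produces only $I_{\mathrm{cells}}$ and $I_Z = S_1\times(S_2\cap Z(q))$, and you do not say how the subsequent partition of $S_1$ feeds back into the $I_Z$ term; in the paper it is precisely the product of the two hypersurface pieces that is pushed down a dimension by Noether normalization and handled by the induction on $n_1+n_2$. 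Second, your intuition that the $\varepsilon$ and $d^{n^3}$ losses come from ``logarithmic optimization overhead'' and ``cumulative degree growth'' is not quite how the paper's bookkeeping goes: the $\varepsilon$ slack is what makes the fixed-degree induction close (the net exponent of $d$ collected after the cell sum is negative, $1-\frac{4(n_1-2)}{3}-4n_1\varepsilon$), and the $d^{n^3}$ comes from the cardinality base case and from degree accumulation in the Noether-projection branch, not from tuning $D$. Finally, note that the paper's treatment of the residual piece quietly uses that $p$ is $(n_1,n_2)$-irreducible (to get $\dim\bigl(Z(h_1)\times Z(h_2)\cap Z(p)\bigr)\leq n_1+n_2-3$), whereas Proposition~\ref{SZJJ} only states the B\'ezout hypothesis; your instinct to deploy Lemma~\ref{witness} at that point is sound and arguably addresses a gap the paper leaves implicit, but it does not rescue the per-cell estimate.
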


\begin{proof}[Proof of Proposition~\ref{SZJJ}]
The proof will be by double-induction on the dimension of real algebraic set and on the size $\abs{S_1} + \abs{S_2}$. 

For a zero dimensional real algebraic set, the result immediately follows from the assumption on the degree of the complexification.  This is the base for induction on the dimension.

For the base of induction on $\abs{S_1} + \abs{S_2}$:
By Lemma~\ref{trivial} we have the following direct bound 
\[ \abs{V \bigcap S_1 \times S_2} \leq 2d\abs{S_1} \abs{S_2}^{1-\frac{1}{n_2}} + 2n_2 \abs{S_2} . \]
If $\abs{S_{1}}^{\frac{1}{n_1+1}} \leq \abs{S_2}^{\frac{1}{n_2}-\frac{1}{n_2+1}}$, then  we have
\begin{equation} \label{1}
 \abs{V \bigcap S_1 \times S_2} \leq 2d \abs{S_1}^{1-\frac{1}{n_1+1}} \abs{S_2}^{1-\frac{1}{n_2+1}} + 2n_2 \abs{S_2} .
\end{equation}
Note that the inequality (\ref{1}) gives a bound stronger than our claim, and there is no work to do in this special case. This establishes the claim whenever one of the two sets are sufficiently small, and this gives the basis to start the induction. From this point on, we assume  $\abs{S_{1}}^{\frac{1}{n_1+1}} \geq \abs{S_2}^{\frac{1}{n_2}-\frac{1}{n_2+1}}$ and  $\abs{S_2}^{\frac{1}{n_2+1}} \geq \abs{S_1}^{\frac{1}{n_1}-\frac{1}{n_1+1}}$. Also, for $n_1=1$ or $n_2=1$ the claim holds immediately: suppose $n_2=1$, then for every point $x \in S_1$ there are at most $d$ corresponding $y \in S_2$ with $(x,y) \in V$ which gives $d \abs{S_1}$ upper bound. So, we assume $\min \{n_1 , n_2 \} \geq 2$.

The  proof below will count every point on $V \bigcap S_1 \times S_2$ twice. We use Lemma ~\ref{zahl} to find  degree $d^2$ polynomials $h_1$ (respectively $h_2$) gives the following partitioning of $\mathbb{R}^{n_1}$ (respectively $\mathbb{R}^{n_2}$):  
\[ \mathbb{R}^{n_1} =  Z(h_1) \cup \Omega_1 \cup \Omega_2 \cup \ldots \cup \Omega_T ,\]
where $T \leq d^{2n_1}$ and $\abs{\Omega_i \cap S_1} \leq \abs{S_1} / d^{2n_1}$ for all $i=1,2,\ldots, T$, we also require that $h_1h_2$ does not vanish on $V$.  Note that to guarantee $h_1 h_2$ does not vanish on $V$ is simple thanks to Lemma \ref{zahl}: pick $(x,y) \in V$, define $V_y := \{ x \in \mathbb{R}^{n_1} : (x,y) \in V \}$, and use Lemma \ref{zahl} to make sure $h_1$ does not vanish on $V_y$. 

Since we are only interested in counting zeros on $S_1 \times S_2$, in the rest of the proof $\Omega_i$ would simply denote $\Omega_i \cap S_1$. Now we would like to count zeros of $p$ on $\left( S_1 \setminus Z(h_1) \right) \times S_2$. For this we define the following sets:
\[ L_i := \{ y \in S_2 : \exists x \in \Omega_i \; \text{such that} \; (x,y) \in V \} .\]
Note that for a fixed $y \in S_2$ the set $\{ x \in \mathbb{R}^{n_1} :  (x,y) \in V \}$ is a variety of dimension at most $n_1-1$. Therefore the number of connected components of the semialgebraic set
\[ \{ x\in \mathbb{R}^{n_1} : (x, y) \in V  \; , \; h_1(x) \neq 0 \} \] 
is bounded by $O(d^{2n_1-2})$ due to Theorem \ref{SS}. This simply shows that any $y \in S_2$ can be included in at  most $O(d^{2n_1-2})$ many $L_i$'s. So we have
\begin{equation} \label{nice}
 \sum_{i=1}^{T} \abs{L_i} \leq d^{2n_1-2} \abs{S_2}. 
\end{equation}

Using the induction hypothesis, we can bound incidences between $\Omega_i$ and $L_i$:
\[ \abs{  \Omega_i \times L_i \bigcap Z(p)  }   \leq   c_{\varepsilon} d^{n^{3k}+1} \abs{\Omega_i}^{1-\frac{1}{n_1+1}+\varepsilon} \abs{L_i}^{1-\frac{1}{n_2+1}+ \varepsilon} + 2n d^{n^3} \left(  \abs{L_i} + \abs{\Omega_i} \right) .\]
Summing through $\Omega_i$ we have:

\begin{multline*}
\abs{ (S_1 - Z(h_1)) \times S_2  \bigcap Z(p)} \leq c_{\varepsilon} d^{n^{3k}}\sum_{i=1}^T d \abs{\Omega_i}^{1-\frac{1}{n_1+1}+\varepsilon} \abs{L_i}^{1-\frac{1}{n_2+1}+ \varepsilon} +  
\\  2n d^{n^3} \sum_{i=1}^T ( \abs{L_i} +  \abs{\Omega_i} ) .
\end{multline*}
 
Since $\abs{\Omega_i} \leq \abs{S_1} / d^{4n_1}$ for all $i$,  we have

\[
 \sum_{i=1}^T  d \abs{\Omega_i}^{1-\frac{1}{n_1+1}+\varepsilon} \abs{L_i}^{1-\frac{1}{n_2+1} + \varepsilon}  \leq d^{1- 4n_1(1-\frac{1}{n_1+1}+\varepsilon)} \abs{S_1}^{1-\frac{1}{n_1+1}+\varepsilon} \sum_{i=1}^T \abs{L_i}^{1-\frac{1}{n_2+1} + \varepsilon} .
 \]
Using H\"older's inequality and (\ref{nice}) we have
\[
  \begin{aligned}
  \sum_{i=1}^{T} \abs{L_i}^{1-\frac{1}{n_2+1}+\varepsilon}
  & \leq T^{\frac{1}{n_2+1}-\varepsilon} \left( \sum_{i=1}^{T} \abs{L_i} \right)^{1-\frac{1}{n_2+1}+\varepsilon} \\
  &  \leq d^{ 4n_1 (\frac{1}{n_2+1}-\varepsilon) + (2n_1-2)(1-\frac{1}{n_2+1}+\varepsilon)} \abs{S_2}^{1-\frac{1}{n_2+1}+\varepsilon}.
  \end{aligned}
\]
Now we collect the exponents of $d$ in last two inequalities and make it human readable:
\[
  4n_1 (\frac{1}{n_2+1}-\varepsilon) +
  (2n_1-2)(1-\frac{1}{n_2+1}+\varepsilon) +
  1- 4n_1(1-\frac{1}{n_1+1}+\varepsilon) .
\]
We can bound this expression, using basic algebra and the inequality
$\min \{ n_1 , n_2 \} \geq 2$, as follows:
\[
\begin{aligned}
  (2n_1+2)(\frac{1}{n_2+1}-\varepsilon) + 2n_1 -2 -4n_1 - 4n_1 \varepsilon + 5
  & \leq \frac{11 - 4n_1}{3} -4n_1 \varepsilon \\
  & \leq 1 -\frac{4(n_1-2)}{3} -4n_1 \varepsilon .
\end{aligned}
\]
All in all, we have
\begin{multline*}
  \abs{\left( S_1 \setminus Z(h_1) \right) \times S_2  \bigcap Z(p)}
  \\  \leq c_{\varepsilon} d^{1-\frac{4(n_1-2)}{3}-4n_1\varepsilon}
  \abs{S_1}^{1-\frac{1}{n_1+1}+\varepsilon} \abs{S_2}^{1-\frac{1}{n_2+1}+\varepsilon}
   +  2n d^{n^3} \left( \abs{S_1} + d^{2n_1-2} \abs{S_2}  \right) .
\end{multline*}

We repeat the same counting argument for the zeros of $p$ on
$S_1 \times \left( S_2 \setminus Z(h_2) \right)$, which gives us the
following the following upper bound:
\begin{multline*}
  \abs{ S_1 \times \left( S_2 \setminus Z(h_2) \right) \bigcap Z(p) }
  \\ \leq  c_{\varepsilon} d^{ 1-\frac{4(n_2-2)}{3}-4n_2\varepsilon} \abs{S_1}^{1-\frac{1}{n_1+1}+\varepsilon}
  \abs{S_2}^{1-\frac{1}{n_2+1}+\varepsilon} +  2n d^{n^3} \left( d^{2n_2-2} \abs{S_1} +  \abs{S_2}  \right) .
\end{multline*}
We note that the difference 
\[ d^{n^{3k}}(d - d^{1-\frac{4(n-4)}{3}-4n \varepsilon}) \abs{S_1}^{1-\frac{1}{n_1+1}+\varepsilon} \abs{S_2}^{1-\frac{1}{n_2+1}+\varepsilon} \]
is bigger than $d^{n^3(n-4)} (\abs{S_1} + \abs{S_2})$, so for this part of the induction we are done.

Now we are interested in counting  $\left( S_1 \cap Z(h_1) \right) \times \left( Z(h_2) \cap S_2 \right) \bigcap V$. Let $h=h_1h_2$, and let $\overline{Z(h)}$ be the complexification of the real zero set $Z(h)$. Let $\overline{V}$ denote the complexification of $V$. And let $W$ denote the real part of $\overline{Z(h)} \cap \overline{V}$.  By construction, $Z(h_1) \times Z(h_2) \bigcap V \subseteq W$. We denote the real ideals of $V$ and $W$ with $I(V)$ and $I(W)$, and the real quotient rings with $\mathcal{P}(V)$ and $\mathcal{P}(W)$. $V$ is irreducible over the reals, so we have $\dim \mathcal{P}(V)= \dim V$. Since $h$ is not included in $I(V)$, we have $\dim \mathcal{P}(W) < \dim \mathcal{P}(V)=\dim(V)$. Therefore, the dimension of $W$ is at most one less than the dimension of $V$. We also know that $W$ is $(n_1,n_2)$-irreducible: assume $W$ includes a cartesian products of real curves, this implies that the cartesian product of real curves is included in $\overline{V}$, and hence the cartesian product of real curves is included in $V$, which gives a contradiction. The degree of $h$ is $d^4$ and the degree of $\overline{V}$ is $d$, so the degree of the complexification of $W$ is bounded by $d^5$. By using the induction hypothesis on the dimension of the real algebraic set, we can bound the incidences between an irreducible component of $W$ and $S_1 \times S_2$ as:
\begin{equation} \label{parse}
  c_{\varepsilon} d^{5n^{3k-3}+5} \abs{S_1}^{1-\frac{1}{n_1+1}+\varepsilon} \abs{S_2}^{1-\frac{1}{n_2+1} + \varepsilon} + 2(n-2) d^{(n-2)^3} \left(  \abs{S_1} + \abs{S_2} \right).
\end{equation}
The number of connected components of $W$ is bounded by $(2d^5)^{n}$ as follows:  we can write-down $W$ using polynomials that have degree at most  $d^5$ (see e.g. Lemma 4.2 of \cite{tao-solymosi}), using classical Thom-Milnor bound then yields the estimate $(2d^5)^{n}$. To close the induction all we need is to have $(2d^5)^n d^{5n^{3k-3}+5} \leq d^{n^{3k}+1}$, which holds since $n \geq 4$ and $d \geq 2$. 
\end{proof}

Now we are ready to state and prove Theorem~\ref{thm-SZ} for $m = 2$.

\begin{theorem}[The case $m=2$] \label{SZjunior}
Let $S_1 \subset \mathbb{C}^{n_1}$ and $S_2 \subset \mathbb{C}^{n_2}$ be finite sets. Let $n=n_1+n_2$, and let $p$ be a $(n_1,n_2)$-irreducible polynomial of degree $d \geq 2$. Then, for every $\varepsilon >0$ we have 
 \[ \abs{Z(p) \bigcap S_1 \times S_2} = O_{n,d,\varepsilon} \left(  \abs{S_1}^{1-\frac{1}{n_1+1}+\varepsilon} \abs{S_2}^{1-\frac{1}{n_2+1} + \varepsilon} +   \abs{S_1} +  \abs{S_2}  \right)  ,\]
 where $O_{n,d,\varepsilon}$ only hides constants depending on $\varepsilon,d$ and $n$.
\end{theorem}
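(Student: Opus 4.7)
The plan is to deduce \Cref{SZjunior} from \Cref{SZJJ} by way of the partitioning \Cref{partition}. Although the Bézout hypotheses of \Cref{SZJJ} need not hold uniformly on all of $S_1$ or $S_2$, \Cref{partition} (applied with the given roles) yields a decomposition $S_1 = \bigcup_{j=1}^{t} S_{1,j}$ with $t \le d^{2n_1}$ such that for every $j$ and every $n_2$-subset $I \subset S_{1,j}$ one has $\abs{\bigcap_{x \in I} V(p_x)} \le d^{n_2}$. Applied symmetrically with the factors interchanged, it provides $S_2 = \bigcup_{k=1}^{t'} S_{2,k}$ with $t' \le d^{2n_2}$ enjoying the analogous property. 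Thus on every product $S_{1,j} \times S_{2,k}$ both Bézout hypotheses of \Cref{SZJJ} are satisfied, and we may apply \Cref{SZJJ} pairwise to bound $\abs{Z(p) \cap (S_{1,j} \times S_{2,k})}$.

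All that remains is to sum. For the main term, Hölder's inequality applied separately in $j$ and $k$ gives
\[
\sum_{j,k} \abs{S_{1,j}}^{1-\frac{1}{n_1+1}+\varepsilon}\abs{S_{2,k}}^{1-\frac{1}{n_2+1}+\varepsilon} \le t^{\frac{1}{n_1+1}-\varepsilon}(t')^{\frac{1}{n_2+1}-\varepsilon} \abs{S_1}^{1-\frac{1}{n_1+1}+\varepsilon}\abs{S_2}^{1-\frac{1}{n_2+1}+\varepsilon}.
\]
Using $t \le d^{2n_1}$ and $t' \le d^{2n_2}$, the prefactor is at most $d^{\frac{2n_1}{n_1+1}+\frac{2n_2}{n_2+1}} < d^{4}$; combined with the leading $d$ from \Cref{SZJJ}, this produces the $d^5$ factor in the statement. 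The error contribution sums trivially,
\[
\sum_{j,k} 2n\, d^{n^3}\bigl(\abs{S_{1,j}}+\abs{S_{2,k}}\bigr) \le 2n\, d^{n^3}\bigl(t'\abs{S_1}+t\abs{S_2}\bigr) \le 2n\, d^{n^3+2n}\bigl(\abs{S_1}+\abs{S_2}\bigr),
\]
which is dominated by the $d^{2n^4}(\abs{S_1}+\abs{S_2})$ term of the claim.

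The conceptual heart of the argument is already packaged in \Cref{SZJJ}: polynomial partitioning via \Cref{GK}, the double induction on $\abs{S_1}+\abs{S_2}$ and on $n_1+n_2$, the use of Noether normalization to project onto a lower-dimensional variety, and the verification that the projection remains $(n_1-1, n_2-1)$-irreducible. The reduction proposed here is essentially bookkeeping. The only subtle point is that the hypothesis required by \Cref{partition}, namely $(n_1, n_2)$-irreducibility of $p$, is exactly what is assumed in \Cref{SZjunior}, so both applications (direct and symmetric) are legitimate; there is no genuine obstacle beyond making sure the exponents of $d$ line up with the stated bound.
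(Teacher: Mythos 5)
Your proposal is correct and mirrors the paper's own proof of this reduction: both use \Cref{partition} (applied to each factor) to produce sub-grids on which the B\'ezout hypotheses of \Cref{SZJJ} hold uniformly, then apply \Cref{SZJJ} to each pair $S_{1,j}\times S_{2,k}$ and sum, controlling the main term via H\"older's inequality and the error term by the trivial bound on the number of pieces. The arithmetic with the exponents of $d$ (the $d^5$ from $d\cdot d^{2n_1/(n_1+1)}\cdot d^{2n_2/(n_2+1)}$ and the $d^{n^3+2n}\le d^{2n^4}$ absorption) matches the paper's bookkeeping.
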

\begin{proof}
The proof is based on Lemma \ref{witness}, Lemma \ref{partition}, Lemma \ref{transfer} and  Proposition~\ref{SZJJ}.  Suppose two sets $S_1$ and $S_2$ are given and we use Lemma \ref{witness} and  Lemma \ref{partition} to create partitions $S_1 = \cup_{i=1}^M S_{1i}$ and $S_2=\cup_{j=1}^N S_{2j}$ where $M \leq d^{2n_1}$, $N \leq d^{2n_2}$. We embed the sets $S_{1i}$ into $\mathbb{R}^{2n_1}$ and $S_{2j}$ into $\mathbb{R}^{2n_2}$. By Lemma \ref{transfer} we know that $Z(p) \subset \mathbb{R}^{2n_1+2n_2}$ is a $(2n_1,2n_2)$-irreducible real variety, and the complexification of $Z(p)$ in $\mathbb{C}^{2n_1+2n_2}$ has degree at most $d^{2n}$. To apply Proposition \ref{SZJJ}, we need $p$ to be irreducible but this not an issue since $p$ can have at most $d$ many irreducible components. So, w.l.o.g. we treat $Z(p) \subset \mathbb{R}^{2n}$ as an irreducible real variety and apply Proposition \ref{SZJJ} for all pairs of $S_{1i},S_{2j}$. This gives the following bound:
\[  \abs{V \cap S_1 \times S_2}  =  O_{d,n,\varepsilon} \left( \sum_{i,j}  \abs{S_{1i}}^{1-\frac{1}{1+n_1}+\varepsilon} \abs{S_{2j}}^{1-\frac{1}{1+n_2}+\varepsilon} +  \abs{S_{1i}} \abs{S_{2j}} \right) \]

By H\"older's inequality we have
\[
  \sum\nolimits_{i=1}^M \abs{S_{1i}}^{1-\frac{1}{1+n_1}+\varepsilon} \leq M^{\frac{1}{1+n_1}} \abs{S_1}^{1-\frac{1}{1+n_1}+\varepsilon} \leq d^2  \abs{S_1}^{1-\frac{1}{1+n_1}+\varepsilon} .
\]
%
\text{Similarly} \quad
$ \sum_{i=1}^N \abs{S_{2j}}^{1-\frac{1}{1+n_2}+\varepsilon} \leq N^{\frac{1}{1+n_2}} \abs{S_1}^{1-\frac{1}{1+n_2}+\varepsilon} \leq d^2  \abs{S_2}^{1-\frac{1}{1+n_2}+\varepsilon} .$
Note that 
\[
  \sum_{i,j} \abs{S_{1i}}^{1-\frac{1}{1+n_1}+\varepsilon} \abs{S_{2j}}^{1-\frac{1}{1+n_2}+\varepsilon} =  \left( \sum_{i=1}^M \abs{S_{1i}}^{1-\frac{1}{1+n_1}+\varepsilon} \right) \left( \sum_{i=1}^N \abs{S_{2j}}^{1-\frac{1}{1+n_2}+\varepsilon} \right) .\]
Also note that, since $1 \leq i \leq M \leq d^{2n_1}$ and $1 \leq j \leq N \leq d^{2n_2}$, we have
\[ \sum_{1 \leq i \leq M} \sum_{1 \leq j \leq M}  \abs{S_{1i}} +  \abs{S_{2j}}  \leq d^{2n} (\abs{S_1} + \abs{S_2}) ,\]
which completes the proof of Theorem \ref{SZjunior}. 
\end{proof}

\section*{Acknowledgements}
Our interest in the topic was sparked by reading the survey article \cite{tao},
and the influence of this survey can probably be seen throughout this note. We
are grateful to Prof. Terence Tao for his splendid exposition. Many thanks go to
\"Ozg\"ur Ki\c{s}isel for enjoyable discussions at the initial stages of this
work, to Pravesh Kothari for allowing A.E. to forget about computers sometimes,
and to J. Maurice Rojas for supporting A.E. and J.M. in his REU program at Texas
A\&M Mathematics. We also thank Frank de Zeeuw, Adam Sheffer, D\"om\"ot\"or
P{\'a}lv\"olgyi for useful remarks, and to Joshua Zahl for a wonderful question
that helped us to locate a mistake in one of our proofs in the earlier version
of this paper. Last but not least, we are grateful to the two anonymous referees
for their very helpful comments and remarks.

\bibliographystyle{siamplain}
\bibliography{mSZ}
\end{document}